
\documentclass{birkjour_t2}
%
%
%
 \newtheorem{thm}{Theorem}[section]
 \newtheorem{lem}[thm]{Lemma}
 \newtheorem{prop}[thm]{Proposition}
 \theoremstyle{definition}
 \newtheorem{defn}[thm]{Definition}
 \theoremstyle{remark}
 \newtheorem{rem}[thm]{Remark}
 \numberwithin{equation}{section}

\usepackage[english]{babel}

\usepackage{epstopdf}

\usepackage{mathrsfs}

\usepackage{enumitem}

\usepackage{functan}
\Macro{C0T}{\mathscr{C}([0,T])}
\Macro{C10T}{\mathscr{C}^1([0,T])}
\Macro{Cw0TH01}{\mathscr{C}_w([0,T];\m{H01Om})}
\Macro{Cw0TL2}{\mathscr{C}_w([0,T];\m{L2Om})}
\Macro{Linfty0TH01}{\mathrm{L}^{\infty}(0,T;\m{H01Om})}
\Macro{Linfty0TL2}{\mathrm{L}^{\infty}(0,T;\m{L2Om})}
\Macro{CcinftyOm}{\mathscr{C}_c^{\infty}(\Omega)}
\Macro{Ccinfty0T}{\mathscr{C}_c^{\infty}(0,T)}

\Macro{CcinftyQ}{\mathscr{C}_c^{\infty}(Q)}
\Macro{CcinftyOmRd}{\mathscr{C}_c^{\infty}(\Omega;\R^d)}
\Macro{C0TL2}{\mathscr{C}([0,T];\m{L2Om})}
\Macro{C0TX}{\mathscr{C}([0,T];X)}
\Macro{AC0TX}{\mathscr{AC}([0,T];X)}
\Macro{Cw0TX}{\mathscr{C}_w([0,T];X)}
\Macro{AC0TL2}{\mathscr{AC}([0,T];\m{L2Om})}
\Macro{Lp}{\mathrm{L}^p}
\Macro{Lp0TLp}{\mathrm{L}^p(0,T;\m{LpOm})}
\Macro{L2}{\mathrm{L}^2}
\Macro{LinftyOm}{\mathrm{L}^{\infty}(\Omega)}
\Macro{L10T}{\mathrm{L}^1(0,T)}
\Macro{L1Om}{\mathrm{L}^1(\Omega)}
\Macro{L1OmRd}{\mathrm{L}^1(\Omega;\R^d)}
\Macro{L2Om}{\mathrm{L}^2(\Omega)}
\Macro{L10TL2}{\mathrm{L}^1(0,T;\m{L2Om})}
\Macro{LinftyOmRd}{\mathrm{L}^{\infty}(\Omega;\R^{d})}
\Macro{L20TH01}{\mathrm{L}^2(0,T;\m{H01Om})}
\Macro{L2OmRd}{\mathrm{L}^2(\Omega;\R^{d})}
\Macro{LpOm}{\mathrm{L}^p(\Omega)}
\Macro{LpOmRd}{\mathrm{L}^p(\Omega;\R^{d})}
\Macro{Lp0TX}{\mathrm{L}^p(0,T;X)}
\Macro{Lp0TLpOm}{\mathrm{L}^p(0,T;\m{LpOm})}
\Macro{LpQ}{\mathrm{L}^p(Q)}
\Macro{L10TX}{\mathrm{L}^1(0,T;X)}

\Macro{L2Q}{\mathrm{L}^2(Q)}
\Macro{L10Torspapot*OmRd}{\mathrm{L}^1(0,T;\m{orspapot*OmRd})}
\Macro{LinftyQRd}{\mathrm{L}^{\infty}(Q;\R^{d})}
\Macro{L10TH01}{\mathrm{L}^1(0,T;\m{H01Om})}
\Macro{L10TL2OmRd}{\mathrm{L}^1(0,T;\m{L2OmRd})}
\Macro{L1Q}{\mathrm{L}^1(Q)}
\Macro{L1QRd}{\mathrm{L}^1(Q;\R^{d})}
\Macro{L1QQk}{\mathrm{L}^1(Q\setminus Q_k)}
\Macro{Linfty0TX}{\mathrm{L}^{\infty}(0,T;X)}
\Macro{L20TL2}{\mathrm{L}^2(0,T;\m{L2Om})}
\Macro{H1Om}{\mathrm{H}^1(\Omega)}
\Macro{H01Om}{\mathrm{H}_0^1(\Omega)}
\Macro{W1inftyOm}{\mathrm{W}^{1,\infty}(\Omega)}
\Macro{Hr-1OmRd}{\mathrm{H}^{r-1}(\Omega;\R^{d})}
\Macro{HrH01}{\mathrm{H}^r(\Omega)\cap\mathrm{H}_0^1(\Omega)}
\Macro{Hr}{\mathcal{H}^r}
\Macro{W1pOm}{\mathrm{W}^{1,p}(\Omega)}
\Macro{W01pOm}{\mathrm{W}_0^{1,p}(\Omega)}
\Macro{W1p}{\mathrm{W}^{1,p}}
\Macro{W012Om}{\mathrm{W}_0^{1,2}(\Omega)}
\Macro{W12Om}{\mathrm{W}^{1,2}(\Omega)}
\Macro{HrOm}{\mathrm{H}^r(\Omega)}

\Macro{W110TL2}{\mathrm{W}^{1,1}(0,T;\m{L2Om})}
\Macro{W110TX}{\mathrm{W}^{1,1}(0,T;X)}
\Macro{W120TX}{\mathrm{W}^{1,2}(0,T;X)}
\Macro{W120TH1}{\mathrm{W}^{1,2}(0,T;\m{H1Om})}
\Macro{L20THr*}{\mathrm{L}^2(0,T;(\m{Hr})^*)}
\Macro{orclapotOmRd}{\mathscr{L}_{\pot}(\Omega;\R^d)}
\Macro{orspapotOmRd}{\mathrm{L}_{\pot}(\Omega;\R^d)}
\Macro{orEpotOmRd}{\mathrm{E}_{\pot}(\Omega;\R^{d})}
\Macro{orclapot*OmRd}{\mathscr{L}_{\pot^*}(\Omega,\R^{d})}
\Macro{orEpot*OmRd}{\mathrm{E}_{\pot^*}(\Omega;\R^d)}
\Macro{orspapot*OmRd}{\mathrm{L}_{\pot^*}(\Omega;\R^d)}

\Macro{orclapotQRd}{\mathscr{L}_{\pot}(Q;\R^{d})}
\Macro{orclapot*QRd}{\mathscr{L}_{\pot^*}(Q;\R^{d})}
\Macro{orspapotQRd}{\mathrm{L}_{\pot}(Q;\R^{d})}
\Macro{orspapot*QRd}{\mathrm{L}_{\pot^*}(Q;\R^{d})}
\Macro{orEpotQRd}{\mathrm{E}_{\pot}(Q;\R^{d})}
\Macro{orEpot*QRd}{\mathrm{E}_{\pot^*}(Q;\R^{d})}
\Macro{orspapot0TorspapotOmRd}{\mathrm{L}_{\pot}(0,T;\m{orspapotOmRd})}
\Macro{orspapotQ-introduction}{\mathrm{L}_{\pot}(Q)}
\Macro{orspapot0Torspapot-introduction}{\mathrm{L}_{\pot}(0,T;\mathrm{L}_{\pot}(\Omega))}
\Macro{2Om}{2,\Omega}
\Macro{potOm}{\pot,\Omega}
\Macro{pot*Om}{\pot^*,\Omega}
\Macro{potQ}{\pot,Q}

\newcommand{\nfun}{$\mathcal{N}$-function}
\newcommand{\eps}{\varepsilon}

\newcommand{\R}{\mathbb{R}}
\newcommand{\N}{\mathbb{N}}
\newcommand{\nonl}{\sigma}
\newcommand{\pot}{\phi}
\newcommand{\dt}{\operatorname{dt}}
\newcommand{\dx}{\operatorname{dx}}

\begin{document}

%
%
%
%
%
%
%
%
%

\title[Fully discrete scheme in elastodynamics in Orlicz spaces]{Convergence of a Full Discretization for a Second-Order\\ Nonlinear Elastodynamic Equation in Isotropic\\ and Anisotropic Orlicz Spaces}

\author{A. M. Ruf}

\address{%
Department of Mathematics\\
University of Oslo\\
P.O. Box 1053, Blindern\\
0316 Oslo\\
Norway}

\email{adrianru@math.uio.no}

\thanks{%
\begin{minipage}{1cm}
\includegraphics[height=1cm]{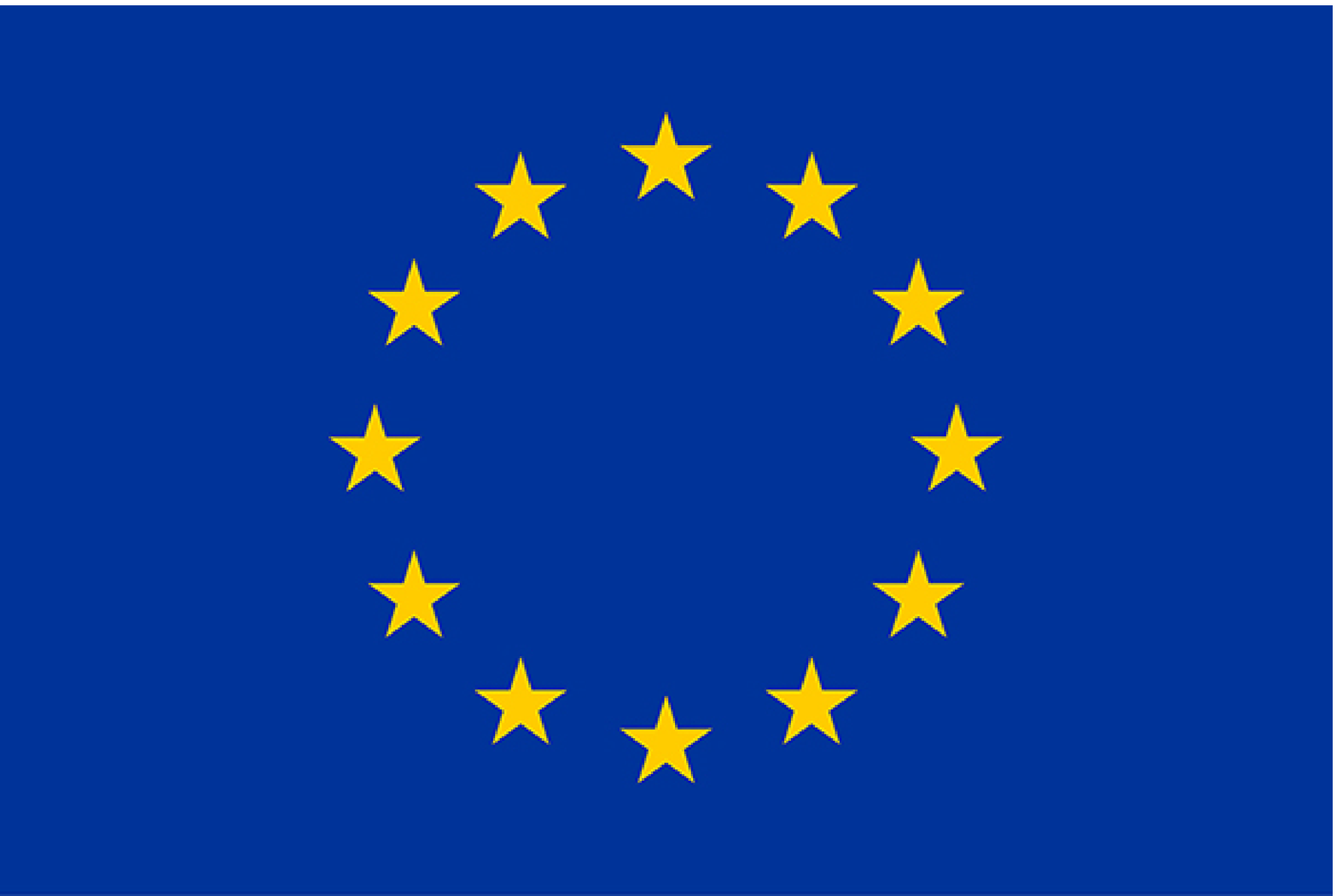}
\end{minipage}
\vspace{-1cm}\flushright This project has received funding from the European Union's Framework Programme for Research and\\ Innovation Horizon 2020 (2014-2020) under the Marie Sk{\l}odowska-Curie Grant Agreement No. 642768.
}
\subjclass{35L20; 47J35; 47H05; 65M12; 65M06; 35M60}

\keywords{Nonlinear evolution equation of second order in time with damping; elastodynamics; monotone operator; nonpolynomial growth; anisotropic Orlicz space; existence; full discretization; convergence}

\date{\today}

\begin{abstract}
In this paper, we study a second-order, nonlinear evolution equation with damping arising in elastodynamics. The nonlinear term is monotone and possesses a convex potential but exhibits anisotropic and nonpolynomial growth.
The appropriate setting for such equations is that of monotone operators in Orlicz spaces. Global existence of solutions in the sense of distributions is shown via convergence of the backward Euler scheme combined with an internal approximation.
Moreover, we show uniqueness in a class of sufficiently smooth solutions and provide an a priori error estimate for the temporal semidiscretization.
\end{abstract}

\maketitle
\section{Introduction}
In this paper, we study the following second-order nonlinear hyperbolic elastodynamic equation
\begin{subequations}
\begin{align}
    	\partial_{tt}u- \Delta \partial_{t}u - \nabla \cdot \nonl(\nabla u) &= f &&\text{in }Q:=\Omega\times(0,T) \\
    	u&=0&&\text{on }\partial\Omega\times(0,T)\\
    	u(\cdot,0)=u_0,\, \partial_t u(\cdot,0)&=v_0 &&\text{in }\Omega.
\end{align}\label{prob1}%
\end{subequations}%
We want to show existence of solutions $u:\overline{\Omega}\times [0,T]\to\R$, where $\Omega\subset\R^d$ is a bounded Lipschitz domain, and $T>0$.
The stress $\nonl:\R^d\to\R^d$ is assumed to have the potential $\pot:\R^{d}\to \R$.
Such an equation arises (although as a system) in solid mechanics describing viscoelastic material \cite{klouvcek1994computation,klouvcek1994computational,rybka1992dynamical}.

In this paper, we will assume that the potential $\pot$ is an \nfun~(see definition \ref{def nfun}) and is thus convex. Note that the nonlinearity then is monotone such that
\begin{equation*}
	(\nonl(\xi)-\nonl(\eta))\cdot(\xi-\eta) \geq 0\quad \text{for all } \xi,\eta\in\R^d.
\end{equation*}
Moreover, we assume that $\nonl$ satisfies the following growth condition in terms of the \nfun~$\pot$ and its conjugate:
\begin{equation}
	\pot^*(\nonl(\xi))\leq C(1+\pot(\xi))\quad \text{for all }\xi\in\R^d. \label{growth condition}
\end{equation}
This constitutes a generalization of the current results concerning equation \eqref{prob1} since we do not need to assume polynomial growth of $\pot$.


Previous results regarding equation \eqref{prob1} in higher dimensions were obtained by Gajewski, Gr\"oger, and Zacharias \cite{zacharias1974nichtlineare}, Clements \cite{clements1974existence}, Friedman and Ne\u{c}as \cite{friedman1988systems}, Engler \cite{engler1989global}, and Rybka \cite{rybka1992dynamical}, all of which rely either on monotonicity or global Lipschitz continuity of $\nonl$.
More recent contributions from Friesecke and Dolzmann \cite{friesecke1997implicit} and Emmrich and \v{S}i\v{s}ka \cite{emmrich2013evolution} have generalized these results in the sense that they only assume that $\nonl${} satisfies the Andrews-Ball condition, i.e.  there exists $\lambda>0$ such that
\begin{equation*}
	(\nonl(\xi)-\nonl(\eta))\cdot (\xi-\eta)\geq -\lambda |\xi-\eta|^2\quad \text{for all }\xi,\eta\in\R^d,
\end{equation*}
which is fulfilled if $\nonl$ is monotone or globally Lipschitz continuous.

On the other hand, all of the contributions listed above critically rely upon $\nonl$ and its potential $\pot$ satisfying a polynomial growth and coercivity condition, i.e. there exists $p\geq 2$ and constants $C_1,C_2,C_3\geq 0$ such that
\begin{align*}
 	\pot(\xi) &\geq C_1|\xi|^p - C_2\\
 	\text{and }|\nonl(\xi)|&\leq C_3 (1+|\xi|)^{p-1}
 \end{align*}
which, in essence, is the growth condition \eqref{growth condition} for $\pot\sim |\cdot|^p$ and $\pot^*=C|\cdot|^{\frac{p}{p-1}}$.

However, in this paper we want to generalize the growth condition and allow for anisotropic and nonpolynomial growth. The appropriate setting is that of Orlicz spaces, where we demand that the potential $\pot$ is an \nfun~and therefore convex. Hence, we obtain monotonicity of the nonlinearity $\nonl$.
We are aware of the fact that it would be desirable to weaken the monotonicity assumption and only demand the Andrews-Ball condition, although we are not yet able to prove convergence under those assumptions.

The polynomial growth and coercivity assumption leads to an $\m{Lp}$-setting where the Lebesgue space $\m{LpQ}$ over the space-time cylinder is isometrically isomorphic to the Bochner-Lebesgue space $\m{Lp0TLp}$ for $p<\infty$. This assumption allows us to reduce the partial differential equation to an operator differential equation for functions in time taking values in an appropriate Banach space of functions in space. 
However, the Orlicz space $\m{orspapotQ-introduction}$, generated by the \nfun~$\pot$, is only isometrically isomorphic to the Orlicz space $\m{orspapot0Torspapot-introduction}$ if $\pot$ is equivalent to some power function (see \cite[Proposition 1.3 on p. 218]{donaldson1974inhomogeneous}). This fact poses a main difficulty in our approach.

Our main result, which will be presented in Theorem \ref{main thm}, provides global existence of a solution. The proof shows the convergence of a subsequence of the sequence of approximate solutions, generated by a discretization in time by the backward Euler scheme and in space by a suitable Galerkin scheme. 

Qualitative studies and numerical results in the case of polynomial growth conditions have been performed by Ball, Holmes, James, Pego and Swart \cite{ball1991dynamics}, Friesecke and McLeod \cite{friesecke1996dynamics,friesecke1997dynamic}, and Carstensen and Dolzmann \cite{carstensen2004time}. Additionally, Prohl considered a finite element based full discretization of the equation
\begin{equation*}
	\partial_{tt}u- \eps\Delta \partial_{t}u - \nabla \cdot \nonl(\nabla u) = 0
\end{equation*}
for $\eps>0$, as well as for $\eps=0$ and presents numerical experiments \cite{prohl2008convergence}. The limit case $\eps =0$ constitutes the elastodynamic equation
\begin{equation}
	\partial_{tt}u -\nabla\cdot\nonl(\nabla u)=f \label{prob 2}
\end{equation}
 for which one cannot expect smooth solutions even for smooth initial data (see \cite{andrews1980existence,maccamy1967existence}). For an excellent survey of the literature concerning equation \eqref{prob 2} see \cite{emmrich2015survey}.

The remainder of this paper is structured as follows: In Section $2$, we introduce the necessary notation, give a brief introduction to Orlicz spaces and compare the growth condition \eqref{growth condition} with the restrictive $\Delta_2$-condition. The description of the numerical method we employ, the construction of the Galerkin scheme, the proof of existence and uniqueness of the numerical solution, and the derivation of a priori estimates for the fully discrete solution and the discrete time derivative follow in Section $3$. Finally, in Section $4$ we show convergence towards and, thus, existence of an exact solution, as well as its uniqueness (under additional regularity assumptions) and an error estimate for the temporal semidiscretization. The appendix contains an elementary lemma concerning the separability of the space for wich we want to construct the Galerkin scheme.

\section{Notation and Preliminaries}
After a brief survey of the notation we employ, this section provides a quick introduction to the theory of Orlicz spaces and the specific results that are needed for the rest of this paper.
\subsection{General Notation}
We keep the usual notation for function spaces. Let $\Omega\subset\R^d$ be a bounded domain. By $\m{LpOm}$ ($p\in[1,\infty]$), we denote the Lebesgue space, for $\R^d$-valued functions, we write $\m{LpOmRd}$, both equipped with the standard norm $\norm*{p,\Omega}{\cdot}$. Moreover, we rely upon the usual notation for Sobolev spaces. In particular, we have $\m{W1pOm}=\{w\in\m{LpOm}|\nabla w\in\m{LpOmRd}\}$ (with $\m{H1Om}=\m{W12Om}$), and $\m{W01pOm}$ ($p\in[1,\infty)$) denotes the closure of $\m{CcinftyOm}$ with respect to the $\m{W1p}$-norm (with $\m{H01Om}=\m{W012Om})$. Here $\m{CcinftyOm}$ denotes the space of infinitely times differentiable functions with compact support in $\Omega$. Similarly, by $\m{HrOm}$ we denote higher Sobolev spaces consisting of elements of $\m{L2Om}$ for which the derivatives up to order $r$ are again in $\m{L2Om}$.

With $\m{Lp0TX}$ ($p\in[1,\infty]$), we denote the usual Bocher-Lebesgue space, where $X$ denotes a Banach space. We recall that $\m{Lp0TLpOm}=\m{LpQ}$ if $p<\infty$. Here, we identify the abstract function $u:[0,T]\to\m{LpOm}$ with the function $u:\overline{Q}\to\R$ via $[u(t)](x)=u(x,t)$. The standard norm is then denoted by $\norm*{p,Q}{\cdot}$. The space of functions in $\m{L10TX}$ whose distributional time derivative is again in $\m{L10TX}$ is denoted by $\m{W110TX}$ and equipped with the standard norm. Analogously we define $\m{W120TX}$. By $\m{C0TX}$, $\m{AC0TX}$ and $\m{Cw0TX}$, we denote the usual spaces of uniformly continuous, absolutely continuous and demicontinuous (i.e. continuous with respect to the weak topology in $X$) functions $u:[0,T]\to X$, respectively (see also \cite{zacharias1974nichtlineare} for details). By $\langle\cdot,\cdot\rangle$, we denote the duality pairing. 
We will use the notation $X\hookrightarrow Y$ and $X\stackrel{c}{\hookrightarrow} Y$ to indicate that a Banach space $X$ is continuously respectively compactly embedded in a Banach space $Y$.
Finally, $C$ denotes a generic positive constant.

\subsection{Orlicz Spaces}

In this section, we provide the definition and basic properties of Orlicz spaces. For an introduction to Orlicz spaces, refer to \cite{kufner1977function},  as well as \cite{adams2003sobolev,gossez1974nonlinear,krasnosel1961convex,skaff1969vector,skaff1969vectorII,zeidler2013nonlinear}. Since we want to include nonlinearities with anisotropic growth, we rely upon anisotropic Orlicz classes and spaces defined by \nfun s with vector-valued arguments, as presented in \cite{skaff1969vector,skaff1969vectorII,donaldson1974inhomogeneous}.
\begin{defn}[\nfun]\label{def nfun}
	A function $\pot:\R^d\to\R$ is said to be an \nfun~if it satisfies the following conditions:
	\begin{enumerate}[label=(\roman*)]
		\item $\pot$ is continuous, $\pot(\xi)=0$ if and only if $\xi=0$, $\pot(-\xi)=\pot(\xi)$ for all $\xi\in\R^d$;\label{Nfun continuity}
		\item $\pot$ is convex;\label{Nfun convexity}
		\item $\pot$ has superlinear growth such that $\lim_{|\xi|\to\infty}\frac{\pot(\xi)}{|\xi|}=\infty$, and $\lim_{|\xi|\to 0} \frac{\pot(\xi)}{|\xi|}=0$.\label{Nfun superlinear growth}
	\end{enumerate}
\end{defn}
Some authors prefer the term generalized \nfun~in order to emphasize the dependence on $\xi$ and not only on $|\xi|$. Note that \ref{Nfun continuity} and \ref{Nfun convexity} imply $\pot(\xi)\geq 0$ for all $\xi\in\R^d$.
Because of the anisotropic character, the function $\pot$ need not be increasing with respect to the components of its vector-valued argument, e.g. $\pot(\xi_1,\xi_2)=\xi_1^2+\xi_2^2+(\xi_1-\xi_2)^2$. For more examples, refer to \cite{emmrich2013convergence}.

For an \nfun~$\pot$, $\pot^*$ denotes the conjugate function given by the Legendre-Fenchel transform $\pot^*(\eta)=\sup_{\xi\in\R^d}(\xi\cdot\eta -\pot(\xi))$, $\eta\in\R^d$. According to \cite{skaff1969vector}, the conjugate function is again an \nfun, and $\pot^{**}=\pot$. Let us recall the Fenchel-Young inequality
\begin{equation*}
	|\xi\cdot\eta|\leq \pot(\xi)+\pot^*(\xi)\quad\text{for all }\xi,\eta\in\R^d.
\end{equation*}
The \emph{anisotropic Orlicz} class $\m{orclapotOmRd}$ is the set of all (equivalence classes of almost everywhere equal) measurable functions $\xi:\Omega\to\R^d$ such that
\begin{equation*}
	\rho_{\pot,\Omega}(\xi):=\int_{\Omega}\pot(\xi(x))\dx<\infty.
\end{equation*}
Although $\m{orclapotOmRd}$ is a convex set, it may not be a linear space, e.g. if $d=1$, $\Omega=(0,1)$ and $\pot(\xi)=e^{|\xi|}-1$, then $\xi=-\frac{1}{2}\ln\in\m{orclapotOmRd}$ but $\zeta=2\xi\not\in\m{orclapotOmRd}$. The mapping $\rho_{\pot,\Omega}$ is a modular in the sense of \cite[p. 208]{kufner1977function}.

Since the function $\pot$ is continuous, $\xi=\xi(x)\in\m{LinftyOmRd}$ implies $x\mapsto\pot(\xi(x))\in\m{LinftyOm}$, which shows that $\m{LinftyOmRd}\subseteq \m{orclapotOmRd}$.

The \emph{anisotropic Orlicz space} $\m{orspapotOmRd}$ is defined as the linear hull of $\m{orclapotOmRd}$. It is a Banach space with respect to the Luxemburg norm
\begin{equation*}
	\norm{potOm}{\xi}:=\inf\left\{\lambda>0\left|\int_{\Omega}\pot\left(\frac{\xi(x)}{\lambda}\right)\dx\leq 1\right.\right\},
\end{equation*}
where the infimum is attained if $\xi\neq 0$. Let us emphasize that, in general, $\m{orspapotOmRd}$ is neither separable nor reflexive. Note that $\rho_{\pot,\Omega}(\xi)\leq \norm{potOm}{\xi}$ if $\norm{potOm}{\xi}\leq 1$ and $\rho_{\pot,\Omega}(\xi)\geq \norm{potOm}{\xi}$ if $\norm{potOm}{\xi}>1$ for all $\xi\in\m{orspapotOmRd}$. Thus
\begin{equation*}
	\norm{potOm}{\xi}\leq \rho_{\pot,\Omega}(\xi) +1.
\end{equation*}
Because of the superlinear growth of $\pot$ we have $\m{orspapotOmRd}\subseteq\m{L1OmRd}$ as shown in \cite[p. 1167]{emmrich2013convergence}.

By definition, the anisotropic Orlicz class and space coincide with the isotropic Orlicz class and space, respectively, if the \nfun~$\pot$ is a radial function.

Let us denote by $\m{orEpotOmRd}$ the closure with respect to the Luxemburg norm of the set of bounded measurable functions defined on $\Omega$. It turns out that $\m{orEpotOmRd}$ is the largest linear space contained in the Orlicz class $\m{orclapotOmRd}$ such that
\begin{equation*}
	\m{orEpotOmRd}\subseteq \m{orclapotOmRd} \subseteq \m{orspapotOmRd}
\end{equation*}
with, in general, strict inclusions. From the equivalence of the Luxemburg and the Orlicz norm
\begin{equation*}
	\norm{potOm}{\xi}^O:=\sup\left\{\left.\int_{\Omega}\xi\cdot\eta\dx\right|\eta\in\m{orclapot*OmRd}\text{ with }\rho_{\pot^*,\Omega}(\eta)\leq 1\right\},
\end{equation*}
one findes that $\m{LinftyOmRd}$ is continuously embedded in $\m{orEpotOmRd}$.

The space $\m{orEpotOmRd}$ is separable and $\m{CcinftyOmRd}$ is dense in $\m{orEpotOmRd}$. The space $\m{orspapotOmRd}$ is the dual of $\m{orEpot*OmRd}$, and the duality pairing is given by
\begin{equation*}
	\langle \xi,\eta\rangle = \int_{\Omega}\xi\cdot\eta\dx\quad \xi\in\m{orspapotOmRd},\,\eta\in\m{orEpot*OmRd}.
\end{equation*}
At this point, we may recall the generalized H\"{o}lder inequality
\begin{equation*}
	\int_{\Omega}\xi\cdot\eta\dx\leq 2\norm{potOm}{\xi}\norm{pot*Om}{\eta}\quad\text{for all }\xi\in\m{orspapotOmRd},\,\eta\in\m{orspapot*OmRd},
\end{equation*}
which shows that $\xi\cdot\eta\in\m{L1Om}$ if $\xi\in\m{orspapotOmRd}$ and $\eta\in\m{orspapot*OmRd}$. The factor $2$ in the H\"{o}lder inequality is due to the use of the Luxemburg norm instead of the Orlicz norm.

\subsection{Growth Conditions in Orlicz Spaces}
If the \nfun~$\pot$ satisfies the so-called $\Delta_2$-condition, i.e., if there exists $C>0$ such that
\begin{equation*}
	\pot(2\xi)\leq C\pot(\xi)\quad\text{for all }\xi\in\R^d,
\end{equation*}
then $\m{orEpotOmRd}=\m{orclapotOmRd}=\m{orspapotOmRd}$ (see \cite[Theorem 2.2]{skaff1969vectorII}). The $\Delta_2$-condition, however, restricts the growth significantly. For the isotropic case, it is known that the $\Delta_2$-condition is not fulfilled if the \nfun~$\pot$ grows faster than a polynomial as shown in \cite[Remark 3.4.6]{kufner1977function}.

The following proposition illustrates the connection between the $\Delta_2$-condition and other growth conditions.
\begin{prop}
	Let $\pot$ be a differentiable \nfun. Then the following two statements are equivalent:
	\begin{enumerate}[label=(\roman*)]
		\item $\pot$ satisfies the $\Delta_2$-condition.
		\item There exists a constant $C>0$ such that
	\begin{equation*}
		\pot^*(\pot'(\xi))\leq C\pot(\xi)
	\end{equation*}
	for all $\xi\in\R^d$.
	\end{enumerate}
\end{prop}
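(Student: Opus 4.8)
The plan is to exploit the fact that for a differentiable $\nfun$ the Fenchel--Young inequality $\xi\cdot\eta \leq \pot(\xi) + \pot^*(\eta)$ becomes an \emph{equality} precisely when $\eta = \pot'(\xi)$. In particular, $\pot^*(\pot'(\xi)) = \pot'(\xi)\cdot\xi - \pot(\xi)$ for every $\xi\in\R^d$. This single identity turns statement (ii) into the assertion that $\pot'(\xi)\cdot\xi \leq (C+1)\pot(\xi)$ for all $\xi$, so the proposition reduces to showing that this "super-homogeneity" bound on the radial derivative is equivalent to the $\Delta_2$-condition. This is the standard characterization and I would prove it by going through the one-dimensional auxiliary function $g_\xi(t) := \pot(t\xi)$ for fixed $\xi\neq 0$ and $t>0$; note $g_\xi$ is convex, increasing, and $g_\xi'(t) = \pot'(t\xi)\cdot\xi$, so $t\,g_\xi'(t) = \pot'(t\xi)\cdot(t\xi)$.

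\emph{(i) $\Rightarrow$ (ii).} Assuming $\Delta_2$, so $\pot(2\xi)\leq C\pot(\xi)$ for all $\xi$, I would use convexity and the fundamental theorem of calculus along the ray: for fixed $\xi$,
\begin{equation*}
	\pot(2\xi) - \pot(\xi) = \int_1^2 g_\xi'(t)\,dt \geq g_\xi'(1) = \pot'(\xi)\cdot\xi,
\end{equation*}
since $g_\xi'$ is nondecreasing. Hence $\pot'(\xi)\cdot\xi \leq \pot(2\xi) - \pot(\xi) \leq (C-1)\pot(\xi)$, and combining with the Fenchel--Young equality above gives $\pot^*(\pot'(\xi)) = \pot'(\xi)\cdot\xi - \pot(\xi) \leq (C-2)\pot(\xi)$, which is (ii) with constant $C-2$ (trivially adjusting if $C<2$).

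\emph{(ii) $\Rightarrow$ (i).} Assume $\pot'(\xi)\cdot\xi \leq (C+1)\pot(\xi)=:K\pot(\xi)$ for all $\xi$. Fix $\xi\neq 0$ and set $h(t) := g_\xi(t) = \pot(t\xi)$ on $t>0$; then $h$ is positive, differentiable, and the hypothesis reads $t\,h'(t)\leq K\,h(t)$, i.e. $\frac{d}{dt}\log h(t) \leq \frac{K}{t}$. Integrating from $1$ to $2$ yields $\log h(2) - \log h(1) \leq K\log 2$, that is $\pot(2\xi) = h(2) \leq 2^K h(1) = 2^K\pot(\xi)$, which is the $\Delta_2$-condition with constant $2^K = 2^{C+1}$.

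\emph{Main obstacle.} The only genuinely delicate point is the differentiability/measurability bookkeeping: one must justify that $t\mapsto \pot(t\xi)$ is absolutely continuous with derivative $\pot'(t\xi)\cdot\xi$ (which follows from differentiability of $\pot$ together with local Lipschitz continuity guaranteed by convexity), and that $\pot(t\xi)>0$ for $t>0$ and $\xi\neq0$ so that $\log h$ is well defined and the Gr\"onwall-type integration is legitimate — this uses $\pot(\eta)=0 \iff \eta=0$ from condition \ref{Nfun continuity}. Everything else is the elementary convexity estimates and the Fenchel--Young equality for differentiable $\nfun$s.
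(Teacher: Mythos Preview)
Your proof is correct and follows essentially the same approach as the paper: both use the Fenchel--Young equality $\pot^*(\pot'(\xi)) = \pot'(\xi)\cdot\xi - \pot(\xi)$ to reduce the statement to the equivalence of the $\Delta_2$-condition with the bound $\pot'(\xi)\cdot\xi \leq (C+1)\pot(\xi)$. The only difference is that the paper cites this last equivalence from \cite[Theorem 3.2]{skaff1969vector}, whereas you supply the elementary one-dimensional argument along rays (via $g_\xi(t)=\pot(t\xi)$) directly, making your version self-contained.
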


\begin{proof}
	According to \cite[Theorem 5.1]{skaff1969vector}, we have equality in the Fenchel-Young inequality if $\eta=\pot'(\xi)$. Thus $\pot^*(\pot'(\xi))=\pot'(\xi)\cdot\xi - \pot(\xi)$ and it suffices to show that the $\Delta_2$ condition is equivalent to the existence of a constant $C>0$ such that
	\begin{equation*}
		\frac{\pot'(\xi)\cdot\xi}{\pot(\xi)} \leq C+1
	\end{equation*}
	which follows from \cite[Theorem 3.2]{skaff1969vector}.
\end{proof}

\subsection{Orlicz Spaces Over the Space-Time Cylinder}
In this article, we also consider Orlicz classes and spaces over the space-time cylinder $Q$; the definitions and results introduced earlier are the same with $\Omega$ replaced by $Q$. We emphasize that $\m{orspapotQRd}\neq\m{orspapot0TorspapotOmRd}$, except for the case when $\pot$ is equivalent to a power function as proven in \cite[Proposition 1.3 on p. 218]{donaldson1974inhomogeneous}.

\section{Full Discretization}\label{sec: full discretization}
In this section, we describe the numerical approximation of \eqref{prob1}. We consider an equidistant time grid: for $N\in\N$, let $\tau=T/N$ and $t_n=n\tau$ $(n=0,1,\ldots,N)$. In addition to the time discretization, we consider an internal approximation $(V_m)_{m\in\N}$ of the space
\begin{equation*}
	V:=\{w\in\m{H01Om}|\nabla w\in\m{orEpotOmRd}\},\qquad \norm*{V}{w}:=\norm{2Om}{\nabla w} + \norm{potOm}{\nabla w}
\end{equation*}
so that $V_m\subseteq \m{W1inftyOm}$, which we will construct in the next subsection.
With respect to the right-hand side, we consider the following restriction to the time grid: for $n=1,2,\ldots,N$, let $f^n=\frac{1}{\tau}\int_{t_{n-1}}^{t_n} f(\cdot,t)\dt$.
The numerical method we consider now reads as follows: for given $u^0,v^0\in V_m$ and $f\in\m{L10TL2}$, find $(u^n)_{n=1}^N,(v^n)_{n=1}^N\subset V_m$ such that for $n=1,\ldots,N$
\begin{subequations}
\begin{align}
	\int_{\Omega}\frac{v^n-v^{n-1}}{\tau}\psi + \nabla v^n\cdot\nabla \psi +\nonl(\nabla u^n)\cdot \nabla \psi \dx &=\int_{\Omega} f^n\psi\dx\quad \text{for all }\psi\in V_m,
	\intertext{where}
	\frac{u^n-u^{n-1}}{\tau} &= v^n,
\end{align}\label{numscheme}
\end{subequations}
that is, $u^n=u_0+\tau\sum_{j=1}^n v^j$. Note that $\nonl(\nabla u^n)$ is in $\m{L1Om}$ because $u^n\in\m{W1inftyOm}$ and $\nonl$ is continuous.

The scheme \eqref{numscheme} can also be written as
\begin{equation*}
	\int_{\Omega}\frac{u^n-2u^{n-1}+u^{n-2}}{\tau^2}\psi +\frac{\nabla u^n-\nabla u^{n-1}}{\tau}\cdot\nabla \psi + \nonl(\nabla u^n)\cdot\nabla\psi = \int_{\Omega}f^n\psi
\end{equation*}
for all $\psi\in V_m$ for $n=1,2,\ldots,N$, where $u^{-1}:=u^0-\tau v^0$.

\subsection{Construction of the Galerkin Scheme}
Next, we construct a special Galerkin scheme which provides stability that we will later employ to bound the discrete second time derivative.
Let $r\in\N$ be sufficiently big such that
\begin{equation*}
	\m{Hr-1OmRd}\hookrightarrow\m{LinftyOmRd} \hookrightarrow \m{orEpotOmRd}.
\end{equation*}
Consequently, the space $\m{Hr}:=\m{HrH01}$ is densely embedded in $V$ (see Lemma \ref{Ccinfty dense in V} in the Appendix). We can then define $(\cdot,\cdot)_r$ as the canonical inner product and $\norm*{r}{\cdot}$ as the induced norm in the Hilbert space $\m{Hr}$ and let
\begin{equation*}
	T:\m{L2Om}\to\m{L2Om},\quad f\mapsto u
\end{equation*}
be the solution operator to the following problem:
\begin{equation*}
	\text{For }f\in\m{L2Om}\text{ find }u\in\m{Hr} \text{ such that }
	(u,v)_r=(f,v) \text{ for all }v\in\m{Hr}.
\end{equation*}
The operator $T$ is well-defined (Lemma of Lax-Milgram), selfadjoint, nonnegative, one-to-one and compact. Similar steps as those in \cite[Theorem 6.11 and 9.31]{brezis2010functional}
imply the existence of an orthonormal basis $(e_m)_{m\in\N}$ of $\m{L2Om}$ consisting of eigenfunctions of T, i.e.
\begin{equation*}
	Te_m = \mu_m e_m\quad\text{with }\mu_m>0,\, \mu_m\to 0\text{ for }m\to\infty.
\end{equation*}
Let $\varphi_m:=\sqrt{\mu_m}e_m$, $m\in\N$ then $(\varphi_m)_{m\in\N}$ is an orthonormal basis of $\m{Hr}$. Because of the density of the embedding of $\m{Hr}$ in $V$, the sequence $(\varphi_m)_{m\in\N}$ is a Galerkin basis of $V$ and the spaces $V_m:=\operatorname{span}\{\varphi_1,\ldots,\varphi_m\}$ form a Galerkin scheme with respect to $V$.

Now, let $P_m:\m{L2Om}\to\m{L2Om}$ denote the $\m{L2}$-orthogonal projections onto $V_m$ defined by
\begin{equation*}
	P_m v= \sum_{j=1}^m (v,e_j)e_j = \sum_{j=1}^m (v,\varphi_j)_r \varphi_j,\quad v\in\m{L2Om}.
\end{equation*}
In particular we have
\begin{equation*}
	(P_mv,v_m) = (v,v_m)\quad \text{for all }v_m\in V_m
\end{equation*}
and, since the $e_j$ are eigenfunctions of the operator $T$, $P_m$ is $\m{Hr}$-orthogonal. Therefore, we have for all $v\in\m{Hr}$
\begin{equation*}
	\norm*{r}{P_mv}^2 = (P_mv,P_mv)_r = (v,P_mv)_r\leq \norm*{r}{P_mv}\norm*{r}{v}
\end{equation*}
and thus,
\begin{equation*}
	\sup_{v\in\m{Hr}\setminus\{0\}} \frac{\norm*{r}{P_mv}}{\norm*{r}{v}} \leq 1
\end{equation*}
for all $m\in\N$.

\subsection{Existence of Approximate Solutions}
To demonstrate that the numerical scheme \eqref{numscheme} has a unique solution we use Brouwer's fixed point theorem.

\begin{thm}\label{discrete existence}
	Let $u^0,v^0\in V_m$ and $f\in\m{L10TL2}$ be given. Then there exists a unique solution $(u^n)_{n=1}^N,(v^n)_{n=1}^N$ to the numerical scheme \eqref{numscheme}.
\end{thm}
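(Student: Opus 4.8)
The plan is to solve \eqref{numscheme} one time level at a time, by induction on $n$. For the base case $n=1$ the data $u^0,v^0\in V_m$ are given; for the inductive step we assume $u^{n-1},v^{n-1}\in V_m$ are already determined and solve the $n$-th equation for $v^n\in V_m$, then set $u^n:=u^{n-1}+\tau v^n\in V_m$. Substituting $u^n=u^{n-1}+\tau v^n$, the $n$-th step reads: find $w=v^n\in V_m$ with
\[
	\int_\Omega \frac{w}{\tau}\psi + \nabla w\cdot\nabla\psi + \nonl(\nabla u^{n-1}+\tau\nabla w)\cdot\nabla\psi\,\dx = \int_\Omega f^n\psi + \frac{v^{n-1}}{\tau}\psi\,\dx
\]
for all $\psi\in V_m$. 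Since $V_m$ is finite-dimensional, this is equivalent to $A_n w=0$, where $A_n\colon V_m\to V_m$ is the map that represents, via the $\m{L2Om}$ inner product on $V_m$, the difference of the two sides.

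For existence I would invoke the standard corollary of Brouwer's fixed point theorem: a continuous map $P\colon V_m\to V_m$ such that $(P(w),w)\geq 0$ for all $w$ on some sphere $\{w\in V_m:\|w\|_{\m{L2Om}}=\rho\}$ has a zero in the corresponding ball. Two things must be checked. First, $A_n$ is continuous: because $V_m\subseteq\m{W1inftyOm}$, for $w\in V_m$ the field $\nabla u^{n-1}+\tau\nabla w$ lies in $\m{LinftyOmRd}$, $\nonl$ is continuous (hence bounded on bounded sets), and dominated convergence makes $w\mapsto\int_\Omega\nonl(\nabla u^{n-1}+\tau\nabla w)\cdot\nabla\psi\,\dx$ continuous; the remaining terms are affine. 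Second, coercivity: testing with $\psi=w$,
\[
	(A_n w,w)=\tfrac1\tau\|w\|_{\m{L2Om}}^2+\|\nabla w\|_{\m{L2OmRd}}^2+\int_\Omega\nonl(\nabla u^{n-1}+\tau\nabla w)\cdot\nabla w\,\dx-\int_\Omega\Big(f^n+\tfrac{v^{n-1}}\tau\Big)w\,\dx .
\]
By monotonicity of $\nonl$, used with $\xi=\nabla u^{n-1}+\tau\nabla w$ and $\eta=\nabla u^{n-1}$,
\[
	\int_\Omega\nonl(\nabla u^{n-1}+\tau\nabla w)\cdot\nabla w\,\dx\ \geq\ \int_\Omega\nonl(\nabla u^{n-1})\cdot\nabla w\,\dx\ \geq\ -\|\nonl(\nabla u^{n-1})\|_{\m{L1Om}}\,\|\nabla w\|_{\m{LinftyOmRd}},
\]
and $\nonl(\nabla u^{n-1})\in\m{L1Om}$ since $u^{n-1}\in\m{W1inftyOm}$ and $\nonl$ is continuous. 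Using that all norms on the finite-dimensional space $V_m$ are equivalent, the last expression is $\geq -C_m\|w\|_{\m{L2Om}}$, and the $f^n,v^{n-1}$ integral is likewise $\geq -C\|w\|_{\m{L2Om}}$. Hence $(A_n w,w)\geq\tfrac1\tau\|w\|_{\m{L2Om}}^2-C\|w\|_{\m{L2Om}}>0$ for $\|w\|_{\m{L2Om}}$ large, which produces a zero $w=v^n\in V_m$; set $u^n:=u^{n-1}+\tau v^n$.

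For uniqueness, suppose $v^n,\tilde v^n\in V_m$ both solve the $n$-th equation with the same $u^{n-1},v^{n-1}$, and put $u^n=u^{n-1}+\tau v^n$, $\tilde u^n=u^{n-1}+\tau\tilde v^n$. Subtracting the two equations and testing with $\psi=v^n-\tilde v^n$, and using $\nabla(v^n-\tilde v^n)=\tau^{-1}\nabla(u^n-\tilde u^n)$, gives
\[
	\tfrac1\tau\|v^n-\tilde v^n\|_{\m{L2Om}}^2+\|\nabla(v^n-\tilde v^n)\|_{\m{L2OmRd}}^2+\tfrac1\tau\int_\Omega\big(\nonl(\nabla u^n)-\nonl(\nabla\tilde u^n)\big)\cdot(\nabla u^n-\nabla\tilde u^n)\,\dx=0 .
\]
All three terms are nonnegative — the last by monotonicity of $\nonl$ — so in particular $v^n=\tilde v^n$, and then $u^n=\tilde u^n$. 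This closes the induction and yields the full sequences $(u^n)_{n=1}^N,(v^n)_{n=1}^N$.

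The \emph{main obstacle} is a conceptual rather than a computational one: $\nonl(\nabla u^n)$ is a priori only in $\m{L1Om}$ and $\nonl$ may have anisotropic, nonpolynomial growth, so one cannot pair it against arbitrary elements of $\m{H01Om}$. It is precisely the inclusion $V_m\subseteq\m{W1inftyOm}$ that legitimizes both the continuity of $A_n$ and the lower bound on the nonlinear term in the finite-dimensional argument, while monotonicity is what keeps the coercivity estimate alive without any one-sided growth condition on $\nonl$ (and also gives uniqueness). Everything else — the norm equivalences on $V_m$, the affine terms, the Brouwer corollary — is routine.
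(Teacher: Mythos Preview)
Your proof is correct and follows essentially the same approach as the paper: an induction on $n$ with existence at each step via the Brouwer corollary (continuity from $V_m\subseteq\m{W1inftyOm}$ plus coercivity from monotonicity of $\nonl$) and uniqueness from monotonicity after subtracting two solutions. The only cosmetic difference is that the paper parametrizes the unknown as $w=u^n$ rather than $w=v^n$, which lets it use $\int_\Omega\nonl(\nabla w)\cdot\nabla w\geq 0$ directly (via $\nonl(0)=0$) together with Poincar\'e--Friedrichs, whereas you push the monotonicity against $\nabla u^{n-1}$ and invoke norm equivalence on $V_m$; both routes are valid.
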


The proof of existence of solutions to the numerical scheme is based on the following auxiliary result, which is a direct consequence of Brouwer's fixed point theorem (see \cite{zacharias1974nichtlineare}).
\begin{lem}\label{cor Brouwer fp}
	For some $R>0$, let $\boldsymbol{h}:\overline{B}(0,R)\to\R^m$ be continuous, where $\overline{B}(0,R)\subset \R^m$ denotes the closed ball of radius $R$ with origin $0$ with respect to some norm $\norm*{\R^m}{\cdot}$ on $\R^m$. If
	\begin{equation*}
		\boldsymbol{h}(\boldsymbol{v})\cdot \boldsymbol{v}\geq 0 \text{ for all } \boldsymbol{v}\in\R^m\text{ with }\norm*{\R^m}{\boldsymbol{v}}=R
	\end{equation*}
	then there exists $\boldsymbol{v}^*\in\overline{B}(0,R)$ such that $\boldsymbol{h}(\boldsymbol{v}^*)=0$.
\end{lem}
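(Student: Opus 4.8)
The plan is to argue by contradiction using Brouwer's fixed point theorem in the form valid for an arbitrary nonempty compact convex subset of $\R^m$. First I would observe that the closed ball $\overline{B}(0,R)$ with respect to the norm $\norm*{\R^m}{\cdot}$ is exactly such a set: it is convex and closed directly from the norm axioms, and it is bounded in the Euclidean sense by equivalence of norms on $\R^m$, hence compact. So Brouwer's theorem applies to continuous self-maps of $\overline{B}(0,R)$.

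Assuming, contrary to the assertion, that $\boldsymbol{h}(\boldsymbol{v})\neq 0$ for every $\boldsymbol{v}\in\overline{B}(0,R)$, I would introduce the normalized map
\begin{equation*}
	\boldsymbol{g}(\boldsymbol{v}):=-R\,\frac{\boldsymbol{h}(\boldsymbol{v})}{\norm*{\R^m}{\boldsymbol{h}(\boldsymbol{v})}},\qquad \boldsymbol{v}\in\overline{B}(0,R).
\end{equation*}
This is well-defined since the denominator never vanishes, and continuous as a composition of continuous maps (the norm being continuous); moreover $\norm*{\R^m}{\boldsymbol{g}(\boldsymbol{v})}=R$ for all $\boldsymbol{v}$, so $\boldsymbol{g}$ maps $\overline{B}(0,R)$ into itself. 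Brouwer's theorem then furnishes a fixed point $\boldsymbol{v}^*=\boldsymbol{g}(\boldsymbol{v}^*)$, which automatically satisfies $\norm*{\R^m}{\boldsymbol{v}^*}=R$ since every value of $\boldsymbol{g}$ lies on that sphere.

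The contradiction is then extracted by pairing the fixed-point identity with $\boldsymbol{h}(\boldsymbol{v}^*)$ in the Euclidean inner product, which yields
\begin{equation*}
	\boldsymbol{h}(\boldsymbol{v}^*)\cdot\boldsymbol{v}^* = \boldsymbol{h}(\boldsymbol{v}^*)\cdot\boldsymbol{g}(\boldsymbol{v}^*) = -\frac{R}{\norm*{\R^m}{\boldsymbol{h}(\boldsymbol{v}^*)}}\,\bigl|\boldsymbol{h}(\boldsymbol{v}^*)\bigr|^2 < 0,
\end{equation*}
where $|\cdot|$ denotes the Euclidean norm and the strict inequality uses $\boldsymbol{h}(\boldsymbol{v}^*)\neq 0$. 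Since $\norm*{\R^m}{\boldsymbol{v}^*}=R$, this contradicts the hypothesis $\boldsymbol{h}(\boldsymbol{v})\cdot\boldsymbol{v}\geq 0$ on $\{\norm*{\R^m}{\boldsymbol{v}}=R\}$. Hence $\boldsymbol{h}$ must vanish at some point $\boldsymbol{v}^*\in\overline{B}(0,R)$.

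I do not anticipate any genuine obstacle. The only point deserving a moment's care is that two norms appear — the ball and the normalization in $\boldsymbol{g}$ use $\norm*{\R^m}{\cdot}$, whereas the sign condition and the closing computation use the Euclidean inner product — but this mismatch is harmless, because $\boldsymbol{h}(\boldsymbol{v}^*)\cdot\boldsymbol{h}(\boldsymbol{v}^*)=|\boldsymbol{h}(\boldsymbol{v}^*)|^2>0$ in any case, regardless of which norm defines the ball. An alternative route would invoke the no-retraction theorem for $\overline{B}(0,R)$ directly, but the fixed-point formulation above is the most economical.
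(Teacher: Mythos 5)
Your proof is correct: the normalization map $\boldsymbol{g}(\boldsymbol{v})=-R\,\boldsymbol{h}(\boldsymbol{v})/\norm*{\R^m}{\boldsymbol{h}(\boldsymbol{v})}$ is well defined and continuous under the contradiction hypothesis, Brouwer applies to the compact convex ball, and the closing computation correctly handles the only delicate point, namely that the ball uses an arbitrary norm while the sign condition uses the Euclidean pairing, since $\boldsymbol{h}(\boldsymbol{v}^*)\cdot\boldsymbol{h}(\boldsymbol{v}^*)=|\boldsymbol{h}(\boldsymbol{v}^*)|^2>0$ in any case. The paper itself gives no proof of this lemma, only a citation to Gajewski--Gr\"{o}ger--Zacharias, and your argument is precisely the standard one contained there, so there is nothing further to compare.
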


\begin{proof}[Proof of theorem \ref{discrete existence}]
	We construct a one-to-one mapping between $V_m=\operatorname{span} \{\varphi_1,\ldots,\varphi_m\}$ and $\R^m$ as follows:
	\begin{align*}
		\boldsymbol{w}&=[\boldsymbol{w_1,\ldots,w_m}]\in\R^m &&\leftrightarrow& V_m\ni w&=\sum_{j=1}^{m}\boldsymbol{w_j}\varphi_j,
	\end{align*}
and $\norm*{\R^m}{\boldsymbol{w}}:=\norm{2Om}{w}$ defines a norm.
Existence and uniqueness are now shown step by step. Let us assume that $u^{n-1},u^{n-2}\in V_m$ are given. We show that there exists $u^n\in V_m$ corresponding to $\boldsymbol{u^n}\in\R^m$ being a zero of the mapping $\boldsymbol{h}=[\boldsymbol{h_1,\ldots, h_m}]:\R^m\to\R^m$ defined by
\begin{equation*}
		\boldsymbol{h_j}(\boldsymbol{w}):= \int_{\Omega} \bigg( \frac{w-2u^{n-1}+u^{n-2}}{\tau^2}\varphi_j +\frac{\nabla w-\nabla u^{n-1}}{\tau} \cdot\nabla\varphi_j
		 +\nonl(\nabla w)\cdot \nabla\varphi_j -f^n\varphi_j \bigg) \dx,
	\end{equation*}
	for $j=1,2,\ldots,m$. The continuity of $\boldsymbol{h}:\R^m\to\R^m$ is a consequence of the continuity of $\nonl$ together with the fact that $V_m\subset \m{W1inftyOm}$. With the Cauchy-Schwarz inequality and the monotonicity of $\nonl$ (note that $\nonl(0)=0$ which follows from the properties of $\pot$), we obtain
	\begin{equation*}
  \begin{split}
		\boldsymbol{h}(\boldsymbol{w})\cdot \boldsymbol{w}
		&= \int_{\Omega} \bigg( \frac{w-2u^{n-1}+u^{n-2}}{\tau^2} w +\frac{\nabla w-\nabla u^{n-1}}{\tau}\cdot\nabla w +\nonl(\nabla w)\cdot \nabla w -f^n w \bigg) \dx\\
		&\geq \frac{1}{\tau^2}\norm{2Om}{w}^2 - \frac{1}{\tau^2}\norm{2Om}{u^{n-1}}\cdot\norm{2Om}{w} -\frac{1}{\tau}\norm{2Om}{v^{n-1}} \cdot\norm{2Om}{w}  
    \\ & \hspace{8em}+ \frac{1}{\tau}\norm{2Om}{\nabla w}^2 -\frac{1}{\tau}\norm{2Om}{\nabla u^{n-1}} \cdot\norm{2Om}{\nabla w} - \norm{2Om}{f^n}\cdot\norm{2Om}{w} \\
		&= \frac{1}{\tau}\norm{2Om}{w} \left( \frac{1}{\tau}\norm{2Om}{w} - \frac{1}{\tau}\norm{2Om}{u^{n-1}} - \norm{2Om}{v^{n-1}} - \tau \norm{2Om}{f^n} \right) 
    \\ &\hspace{16em} + \frac{1}{\tau}\norm{2Om}{\nabla w} \left(\norm{2Om}{\nabla w} - \norm{2Om}{\nabla u^{n-1}}\right).
  \end{split}
	\end{equation*}
	Choosing $R=\norm{2Om}{w}$ sufficiently large and incorporating the Poincar\'e-Friedrichs inequality allows us to apply Lemma \ref{cor Brouwer fp}, providing existence of a zero of $\boldsymbol{h}$ and thus a solution to \eqref{numscheme} at level $n$.

	Let $w_1,w_2$ be two solutions of \eqref{numscheme} at level $n$. Then, in view of the monotonicity of $\nonl$ (and $\nonl(0)=0$), we have
	\begin{equation*}
	\begin{split}
		&\frac{1}{\tau^2}\norm{2Om}{w_1-w_2}^2 + \frac{1}{\tau}\norm{2Om}{\nabla w_1-\nabla w_2}^2 \\
		&= \int_{\Omega}\left(\left( \frac{w_1-2u^{n-1}+u^{n-2}}{\tau^2} - \frac{w_2-2u^{n-1}+u^{n-2}}{\tau^2} \right)(w_1-w_2) \right.\\
		&\hspace{4em}\left. + \left(\frac{\nabla w_1-\nabla u^{n-1}}{\tau}-\frac{\nabla w_2-\nabla u^{n-1}}{\tau}\right)\cdot (\nabla w_1-\nabla w_2) \right)\dx\\
		& = -\int_{\Omega} (\nonl(\nabla w_1)-\nonl(\nabla w_2))\cdot(\nabla w_1-\nabla w_2)\dx \leq 0
	\end{split}
	\end{equation*}
	which proves uniqueness.
\end{proof}

\subsection{A Priori Estimates}
The following a priori estimates are the essential prerequisite for the proof of convergence.
\begin{thm}[discrete a priori estimate I]
	The discrete solutions $(u^n)_{n=1}^N,(v^n)_{n=1}^N$ from theorem \ref{discrete existence} satisfy the following a priori estimate for $n=1,2,\ldots, N$:
	\begin{multline}
		\norm{2Om}{v^{n}}^2 +\sum_{j=1}^n \norm{2Om}{v^{j}-v^{j-1}}^2 + 2\tau\sum_{j=1}^n \norm{2Om}{\nabla v^{j}}^2 +2\int_{\Omega} \pot(\nabla u^n)\dx \\
    \leq C \left( \norm{2Om}{v^{0}}^2 +\int_{\Omega}\pot(\nabla u^0)\dx +\norm{L10TL2}{f}^2 \right).\label{dape}
	\end{multline}
\end{thm}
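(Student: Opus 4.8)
The plan is the standard discrete energy argument for the backward Euler scheme: test \eqref{numscheme} with the discrete velocity $v^n$, use the algebraic identity $2(a-b)a=a^2-b^2+(a-b)^2$ for the inertial term, exploit the convexity of the potential $\pot$ to turn the nonlinear contribution into a telescoping difference of $\int_\Omega\pot(\nabla u^j)\dx$, and then sum over the time levels and absorb the source term via a quadratic inequality.

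First I would fix $n\in\{1,\dots,N\}$ and, for each $j\in\{1,\dots,n\}$, insert $\psi=v^j\in V_m$ into \eqref{numscheme} and multiply by $2\tau$; all integrals are finite because $u^j\in\m{W1inftyOm}$ forces $\pot(\nabla u^j)\in\m{LinftyOm}$ and $\nonl(\nabla u^j)\cdot\nabla v^j\in\m{L1Om}$. Using $2\int_\Omega(v^j-v^{j-1})v^j\dx=\norm{2Om}{v^j}^2-\norm{2Om}{v^{j-1}}^2+\norm{2Om}{v^j-v^{j-1}}^2$ for the inertial term, noting that the viscous term equals exactly $2\tau\norm{2Om}{\nabla v^j}^2$, and recalling that $\tau\nabla v^j=\nabla u^j-\nabla u^{j-1}$, it remains to treat the nonlinear term. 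Since $\nonl=\nabla\pot$ with $\pot$ convex, the subgradient inequality $\pot(\nabla u^{j-1})\ge\pot(\nabla u^j)+\nonl(\nabla u^j)\cdot(\nabla u^{j-1}-\nabla u^j)$ gives
\begin{equation*}
  2\tau\int_\Omega\nonl(\nabla u^j)\cdot\nabla v^j\dx=2\int_\Omega\nonl(\nabla u^j)\cdot(\nabla u^j-\nabla u^{j-1})\dx\ge 2\int_\Omega\pot(\nabla u^j)\dx-2\int_\Omega\pot(\nabla u^{j-1})\dx.
\end{equation*}
For the right-hand side, the Cauchy--Schwarz inequality together with the definition $f^j=\frac1\tau\int_{t_{j-1}}^{t_j}f\dt$ and Jensen's inequality yields $2\tau\int_\Omega f^jv^j\dx\le 2\norm{2Om}{v^j}\,\tau\norm{2Om}{f^j}\le 2\norm{2Om}{v^j}\int_{t_{j-1}}^{t_j}\norm{2Om}{f(t)}\dt$.

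Summing these inequalities over $j=1,\dots,n$ makes the norm terms and the potential terms telescope, so that
\begin{multline*}
  \norm{2Om}{v^n}^2+\sum_{j=1}^n\norm{2Om}{v^j-v^{j-1}}^2+2\tau\sum_{j=1}^n\norm{2Om}{\nabla v^j}^2+2\int_\Omega\pot(\nabla u^n)\dx\\
  \le\norm{2Om}{v^0}^2+2\int_\Omega\pot(\nabla u^0)\dx+2\Big(\max_{1\le k\le n}\norm{2Om}{v^k}\Big)\norm{L10TL2}{f}.
\end{multline*}
To close the estimate, set $M:=\max_{1\le k\le n}\norm{2Om}{v^k}$ and $A:=\norm{2Om}{v^0}^2+2\int_\Omega\pot(\nabla u^0)\dx\ge 0$. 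Since $\pot\ge 0$, discarding the nonnegative terms on the left shows $\norm{2Om}{v^k}^2\le A+2M\norm{L10TL2}{f}$ for every $k\le n$, hence $M^2\le A+2M\norm{L10TL2}{f}$; solving this quadratic inequality gives $M\le\norm{L10TL2}{f}+\sqrt{\norm{L10TL2}{f}^2+A}$, and therefore $M^2\le 2A+4\norm{L10TL2}{f}^2$. Substituting this bound for $M$ back into the displayed inequality yields \eqref{dape} with a suitable generic constant $C$.

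The only point requiring care is the treatment of the source term: since the data belong merely to $\m{L10TL2}$, a discrete Gronwall lemma in the usual $\sum_j\tau\norm{2Om}{\cdot}^2$ form is not available, and one must instead bound $\sum_j 2\norm{2Om}{v^j}\int_{t_{j-1}}^{t_j}\norm{2Om}{f}\dt$ by $2M\norm{L10TL2}{f}$ and absorb $M$ through the quadratic inequality above. The remaining steps — the convexity estimate for the nonlinear term and the telescoping after summation — are routine.
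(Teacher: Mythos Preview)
Your proof is correct and follows essentially the same approach as the paper: test with $v^j$, use the algebraic identity $2(a-b)a=a^2-b^2+(a-b)^2$, the convexity inequality $\pot(\nabla u^{j-1})\ge\pot(\nabla u^j)+\nonl(\nabla u^j)\cdot(\nabla u^{j-1}-\nabla u^j)$ to telescope the nonlinear term, sum, and then close via a quadratic inequality for the maximum of $\norm{2Om}{v^k}$. The only cosmetic difference is that the paper picks the single index where the maximum over $1,\dots,N$ is attained and derives the quadratic inequality for that index, whereas you fix $n$, set $M=\max_{k\le n}\norm{2Om}{v^k}$, and observe that the summed estimate for each $k\le n$ already gives $\norm{2Om}{v^k}^2\le A+2M\norm{L10TL2}{f}$; both routes are equivalent.
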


\begin{proof}
	We test the first equation of \eqref{numscheme} with $\psi=v^n$ and employ the convexity inequality, the Cauchy-Schwarz inequality and the identity
	\begin{equation}
		(A-B)\cdot A =\frac{1}{2}(A^2-B^2 +(A-B)^2),\label{(a-b)a rule}
	\end{equation}
	which holds true for all $A,B\in\R$ as well as $A,B\in\R^{d}$. We find
	\begin{align*}
		\frac{1}{2\tau} \left( \norm{2Om}{v^{n}}^2 - \norm{2Om}{v^{n-1}}^2 + \norm{2Om}{v^{n}-v^{n-1}}^2 \right)
		&+ \norm{2Om}{\nabla v^{n}}^2 
    +\frac{1}{\tau}\int_{\Omega}( \pot(\nabla u^n)-\pot(\nabla u^{n-1}) )\dx \\ &\leq \norm{2Om}{f^n}\norm{2Om}{v^{n}}.
	\end{align*}
	Summation then implies for all $n=1,2,\ldots,N$
	\begin{multline}
		\norm{2Om}{v^{n}}^2 + \sum_{j=1}^n \norm{2Om}{v^{j}-v^{j-1}}^2 +2\tau\sum_{j=1}^n \norm{2Om}{\nabla v^{j}}^2 +
    2\int_{\Omega} \pot(\nabla u^n)\dx\\
		\leq \norm{2Om}{v^{0}}^2 + 2\int_{\Omega} \pot(\nabla u^0)\dx +2\tau \sum_{j=1}^n \norm{2Om}{f^{j}}\norm{2Om}{v^{j}}. \label{intermediate_dape}
	\end{multline}
	Taking $n$ such that $\norm{2Om}{v^n}=\max_{j=1,2,\ldots,N}\norm{2Om}{v^j}=:X$ and using
	\begin{equation*}
		\tau\sum_{j=1}^N \norm{2Om}{f^j}\leq \norm*{\m{L10TL2}}{f},
	\end{equation*}
	results in the quadratic inequality
	\begin{equation*}
		X^2\leq \norm{2Om}{v^0}^2 + 2 \int_{\Omega}\pot(\nabla u^0)\dx + 2\norm*{\m{L10TL2}}{f}X,
	\end{equation*}
	implying
	\begin{equation*}
		X\leq \norm{2Om}{v^0} + \sqrt{2} \left(\int_{\Omega}\pot(\nabla u^0)\dx\right)^{\frac{1}{2}} + 2\norm*{\m{L10TL2}}{f}.
	\end{equation*}
	Going back to \eqref{intermediate_dape} proves the assertion.
\end{proof}

\begin{thm}[discrete a priori estimate II]
	The discrete solutions $(u^n)_{n=1}^N,(v^n)_{n=1}^N$ from Theorem \ref{discrete existence} satisfy the following a priori estimate for $n=1,2,\ldots, N$:
	\begin{equation}
		\tau\sum_{n=1}^N \norm*{(\m{Hr})^*}{\frac{v^n-v^{n-1}}{\tau}} \leq C \bigg( \norm{2Om}{v^{0}} +\int_{\Omega}\pot(\nabla u^0)\dx +\norm{L10TL2}{f} 
		+ \max_{n=1,\ldots,N}\norm*{\pot^*,\Omega}{\nonl(\nabla u^n)}\bigg).\label{dapeII}
	\end{equation}
\end{thm}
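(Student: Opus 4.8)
The plan is to bound $\norm*{(\m{Hr})^*}{\frac{v^n-v^{n-1}}{\tau}}$ separately for each $n$ by testing the first equation of \eqref{numscheme} against the $\m{L2}$-projection onto $V_m$ of an arbitrary competitor in $\m{Hr}$, and then to sum the resulting pointwise bounds over $n$, invoking the already-proven a priori estimate \eqref{dape}. The whole argument hinges on the two properties of $P_m$ built into the construction of the Galerkin scheme: it is the $\m{L2}$-orthogonal projection onto $V_m$, and it is stable in the $\m{Hr}$-norm, i.e.\ $\norm*{r}{P_m\phi}\leq\norm*{r}{\phi}$.

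First I would note that, since $\frac{v^n-v^{n-1}}{\tau}\in V_m\subset\m{L2Om}$ and $\m{Hr}$ is continuously and densely embedded in $\m{L2Om}$, this element acts on $\m{Hr}$ through the $\m{L2}$-inner product, so that its $(\m{Hr})^*$-norm equals the supremum of $(\frac{v^n-v^{n-1}}{\tau},\phi)$ over all $\phi\in\m{Hr}$ with $\norm*{r}{\phi}\leq1$. For such a $\phi$, the $\m{L2}$-orthogonality of $P_m$ and the inclusion $\frac{v^n-v^{n-1}}{\tau}\in V_m$ give $(\frac{v^n-v^{n-1}}{\tau},\phi)=(\frac{v^n-v^{n-1}}{\tau},P_m\phi)$; since $P_m\phi\in V_m$, substituting $\psi=P_m\phi$ in \eqref{numscheme} yields
\begin{equation*}
	\left(\frac{v^n-v^{n-1}}{\tau},\phi\right) = \int_{\Omega} f^n\,P_m\phi\,\dx - \int_{\Omega}\nabla v^n\cdot\nabla P_m\phi\,\dx - \int_{\Omega}\nonl(\nabla u^n)\cdot\nabla P_m\phi\,\dx .
\end{equation*}
Next I would estimate the three terms on the right, the first two by the Cauchy--Schwarz inequality and the third by the generalized Hölder inequality in Orlicz spaces, pairing $\nonl(\nabla u^n)$ --- which lies in $\m{LinftyOmRd}\subseteq\m{orspapot*OmRd}$, with finite norm because $u^n\in\m{W1inftyOm}$ and $\nonl$ is continuous --- against $\nabla P_m\phi$. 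The key point is that $\norm{2Om}{P_m\phi}$, $\norm{2Om}{\nabla P_m\phi}$ and $\norm{potOm}{\nabla P_m\phi}$ are all at most $C\norm*{r}{P_m\phi}\leq C\norm*{r}{\phi}\leq C$, by the continuous embeddings $\m{Hr}\hookrightarrow\m{W1inftyOm}$ and $\m{LinftyOmRd}\hookrightarrow\m{orEpotOmRd}$ (both valid by the choice of $r$) together with the $\m{Hr}$-stability of $P_m$. Taking the supremum over $\phi$ then produces the pointwise-in-$n$ bound
\begin{equation*}
	\norm*{(\m{Hr})^*}{\frac{v^n-v^{n-1}}{\tau}}\leq C\left(\norm{2Om}{f^n}+\norm{2Om}{\nabla v^n}+\norm*{\pot^*,\Omega}{\nonl(\nabla u^n)}\right).
\end{equation*}

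It then remains to multiply by $\tau$ and sum over $n=1,\ldots,N$. The contribution of $f^n$ is bounded by $\norm{L10TL2}{f}$, as recalled in Section~\ref{sec: full discretization}; the contribution of $\nonl(\nabla u^n)$ is at most $T\max_{n=1,\ldots,N}\norm*{\pot^*,\Omega}{\nonl(\nabla u^n)}$; and for the middle contribution the discrete Cauchy--Schwarz inequality gives
\begin{equation*}
	\tau\sum_{n=1}^N\norm{2Om}{\nabla v^n}\leq T^{1/2}\Bigl(\tau\sum_{n=1}^N\norm{2Om}{\nabla v^n}^2\Bigr)^{1/2},
\end{equation*}
after which \eqref{dape}, taken at $n=N$, bounds the right-hand side by a constant times the square root of $\norm{2Om}{v^0}^2+\int_{\Omega}\pot(\nabla u^0)\,\dx+\norm{L10TL2}{f}^2$; using $\sqrt{a+b+c}\leq\sqrt a+\sqrt b+\sqrt c$ and $\sqrt t\leq 1+t$ this is controlled by the right-hand side of \eqref{dapeII}. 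Collecting the three estimates finishes the proof.

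I do not expect a genuine obstacle, but the crux --- and the only step requiring real care --- is the correct simultaneous use of the $\m{L2}$-orthogonality and the $\m{Hr}$-stability of $P_m$: these are precisely the structural features for which the special Galerkin basis was constructed, and together they are what allows passing from test functions in $V_m$ to arbitrary test functions in $\m{Hr}$ while retaining the $\m{W1inftyOm}$-control of $\nabla P_m\phi$ needed to treat the nonlinear term $\nonl(\nabla u^n)$ in the non-reflexive, non-separable Orlicz setting. A secondary, purely bookkeeping nuisance is matching the square-root term produced by \eqref{dape} to the exact form of the right-hand side of \eqref{dapeII}.
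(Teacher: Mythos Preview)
Your proposal is correct and follows essentially the same route as the paper: both arguments use the $\m{L2}$-orthogonality of $P_m$ to replace the $\m{Hr}$-test function $\phi$ by $P_m\phi\in V_m$, invoke the scheme equation, estimate the three resulting terms via Cauchy--Schwarz and the generalized H\"older inequality, exploit the $\m{Hr}$-stability of $P_m$ together with $\m{Hr}\hookrightarrow V$ to get the pointwise-in-$n$ bound, and then sum and appeal to \eqref{dape}. Your handling of the square-root bookkeeping at the end is in fact more explicit than the paper's.
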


\begin{proof}
	Since $v^n$ and $v^{n-1}$ are in $V_m\subset V\subset\m{L2Om}$ and due to the $\m{Hr}$-orthogonality of the projection $P_m$, we have
	\begin{align*}
		\norm*{(\m{Hr})^*}{\frac{v^n-v^{n-1}}{\tau}} &= \sup_{v\in\m{Hr}\setminus\{0\}}\frac{1}{\norm*{r}{v}} \left( \frac{v^n-v^{n-1}}{\tau},v\right)_r\\
		& = \sup_{v\in\m{Hr}\setminus\{0\}} \frac{1}{\norm*{\m{Hr}}{v}} \frac{\norm*{\m{Hr}}{P_m v}}{\norm*{\m{Hr}}{P_m v}} \left( \frac{v^n-v^{n-1}}{\tau},P_m v\right)_{\m{L2Om}}.
	\end{align*}
	Since $(v^n)_{n=0}^N$ satisfies the first equation in equation \eqref{numscheme} and $P_mv\in V_m$, we obtain
	\begin{align*}
		\norm*{(\m{Hr})^*}{\frac{v^n-v^{n-1}}{\tau}} 
		= \sup_{v\in\m{Hr}\setminus\{0\}} \frac{\norm*{\m{Hr}}{P_m v}}{\norm*{\m{Hr}}{v}} \cdot \frac{\int_{\Omega} (f^n\cdot P_m v - \nabla v^n\cdot\nabla P_mv -\nonl(\nabla u^n)\cdot\nabla P_m v )\dx}{\norm*{\m{Hr}}{P_m v}}
	\end{align*}
	Employing the Cauchy-Schwarz inequality and the generalized H\"{o}lder inequality, we find
	\begin{align*}
    \int_{\Omega} (f^n \cdot &P_m v - \nabla v^n \cdot\nabla P_mv -\nonl(\nabla u^n)\cdot\nabla P_m v )\dx \\
    &\leq C \Big(\norm{2Om}{f^n}\cdot\norm{2Om}{P_m v} + \norm{2Om}{\nabla v^n}\cdot\norm{2Om}{\nabla P_m v}  + \norm{pot*Om}{\nonl(\nabla u^n)}\cdot\norm{potOm}{\nabla P_m v}\Big)\\
    & \leq C \norm*{V}{P_m v}\left(\norm{2Om}{f^n} + \norm{2Om}{\nabla v^n} + \norm{pot*Om}{\nonl(\nabla u^n)}\right).
  \end{align*}
  Since the continuity of the embedding $\m{Hr}\hookrightarrow V$ implies $ \frac{1}{\norm*{\m{Hr}}{P_m v}} \leq C \frac{1}{\norm*{V}{P_m v}}$ for $v\neq 0$, together with the stability of the projections $P_m$ we obtain
  \begin{align*}
		\norm*{(\m{Hr})^*}{\frac{v^n-v^{n-1}}{\tau}} &\leq C \left(\norm{2Om}{f^n}+\norm{2Om}{\nabla v^n} +\norm{pot*Om}{\nonl(\nabla u^n)} \right).
	\end{align*}
	Multiplying by $\tau$ and summing from $n=1$ to $N$ yields
	\begin{align*}
		\tau\sum_{n=1}^N\norm*{(\m{Hr})^*}{\frac{v^n-v^{n-1}}{\tau}} 
		\leq C \left(\tau\sum_{n=1}^N \norm{2Om}{f^n}+\tau\sum_{n=1}^N \norm{2Om}{\nabla v^n} +\max_{n=1,\ldots,N}\norm{pot*Om}{\nonl(\nabla u^n)} \right).
	\end{align*}
	The claim now follows from $\tau\sum_{n=1}^N \norm{2Om}{f^n}\leq \norm*{\m{L10TL2}}{f}$ and the previous a priori estimate in Theorem \ref{dape}.
\end{proof}

\section{Existence via Convergence of Approximate Solutions}
In the following, let us consider sequences $(m_l)_{l\in\N}$ and $(N_l)_{l\in\N}$ of positive integers such that $m_l,N_l\to\infty$ as $l\to\infty$. The discrete solution to \eqref{numscheme} corresponding to the discretization parameters $m_l,N_l$ (with $\tau_l:=T/N_l$) shall be denoted by $(u_l^n)_{n=0}^{N_l},(v_l^n)_{n=0}^{N_l}$, where $u_l^0\in V_{m_l}$ and $v_l^0\in V_{m_l}$ denote the approximate initial values. We do not explicitly denote the dependence of $t_n=n\tau_l$ on $l$.

Regarding the approximation of the initial values, we assume that
\begin{equation}
	u_l^0\to u_0 \text{ in }V \quad \text{and} \quad v_l^0\to v_0\text{ in }\m{L2Om} \text{ as }l\to\infty.\label{initial data}
\end{equation}
From the discrete solution, we construct approximate solutions defined on the whole time interval as follows: let $u_l$ denote the piecewise constant function such that
\begin{equation*}
	u_l(\cdot, t)= u_l^n\quad \text{if }t\in (t_{n-1},t_n]\quad(n=1,2,\ldots, N_l),\quad u_l(\cdot,0)=u_l^1,
\end{equation*}
and let $\widehat{u}_l$ denote the linear spline interpolating $(t_0,u_l^0),(t_1,u_l^1),\ldots,(t_{N_l},u_l^{N_l})$, i.e.
\begin{align*}
	\widehat{u}_l(t) &= u_l^{n-1} + \frac{u_l^n-u_l^{n-1}}{\tau_l} (t-t_{n-1})\\
  &= \frac{t_n - t}{\tau_l}u_l^{n-1} + \frac{t- t_{n-1}}{\tau_l}u_l^n \qquad\text{for }t\in[t_{n-1},t_n]\quad (n=1,2,\ldots,N_l).
\end{align*}
In an analogous way, we define $v_l$ and $\widehat{v}_l$, as well as the piecewise constant function $f_l$.

The primary result of this paper can be summarized by the following theorem.

\begin{thm}\label{main thm}
	Let $u_0\in V$, $v_0\in\m{L2Om}$ and $f\in\m{L10TL2}$. Further, let $\nonl$ satisfy the growth condition
	\begin{equation*}
		\pot^*(\nonl(A))\leq C(1+\pot(A))
	\end{equation*}
	for all $A\in\R^{d}$. Then there exists a weak solution $u\in\m{Cw0TH01}$ with $\partial_{t}u\in\m{Cw0TL2},$ $\nabla u\in\m{orclapotQRd}$ and $\nonl(\nabla u)\in\m{orclapot*QRd}$ to \eqref{prob1} in the sense of distributions, that is,
	\begin{equation*}
		\int_Q (-\partial_t u \partial_t w + \nabla\partial_t u\cdot\nabla w + \nonl(\nabla u)\cdot \nabla w)\dx\dt = \int_Q fw\dx\dt
	\end{equation*}
	for all $w\in\m{CcinftyQ}$, with $u(\cdot,0)=u_0$ in $V$, and $\partial_t u(\cdot,0)=v_0$ in $\m{L2Om}$.
\end{thm}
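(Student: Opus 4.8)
The plan is to construct a solution of \eqref{prob1} as a limit of the fully discrete approximations $u_l,\widehat u_l,v_l,\widehat v_l$, combining the two discrete a priori estimates \eqref{dape}--\eqref{dapeII}, the growth condition \eqref{growth condition}, weak-$*$ compactness in the Orlicz spaces over $Q$, the Aubin--Lions--Simon compactness lemma, and a monotonicity (Minty--Gossez) argument to identify the weak-$*$ limit of $\nonl(\nabla u_l)$. \emph{Step 1 (uniform bounds).} Estimate \eqref{dape} makes $v_l,\widehat v_l$ bounded in $\m{Linfty0TL2}$, $v_l$ bounded in $\m{L20TH01}$, $\sum_j\norm{2Om}{v_l^j-v_l^{j-1}}^2$ bounded (hence $\norm{L2Q}{v_l-\widehat v_l}+\norm{L2Q}{u_l-\widehat u_l}\to0$), and $\rho_{\pot,Q}(\nabla u_l)$ bounded; since $u_l^n=u_l^0+\tau_l\sum_{j\le n}v_l^j$, and \eqref{initial data} together with $\nabla u_0\in\m{orEpotOmRd}$ keeps $\rho_{\pot,\Omega}(\nabla u_l^0)$ bounded, also $u_l,\widehat u_l$ are bounded in $\m{Linfty0TH01}$. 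Feeding the bound on $\rho_{\pot,Q}(\nabla u_l)$ through \eqref{growth condition} bounds $\rho_{\pot^*,Q}(\nonl(\nabla u_l))$ and $\max_n\norm{pot*Om}{\nonl(\nabla u_l^n)}$, so by \eqref{dapeII} the piecewise constant function equal to $(v_l^n-v_l^{n-1})/\tau_l$, namely $\partial_t\widehat v_l$, is bounded in $\m{L10TX}$ with $X:=(\m{Hr})^*$; thus $\widehat v_l$ is bounded in $\m{W110TX}\cap\m{Linfty0TL2}$.

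\emph{Step 2 (compactness and the limit equation).} Along a subsequence I obtain $u_l,\widehat u_l\overset{*}{\rightharpoonup}u$ in $\m{Linfty0TH01}$, $v_l,\widehat v_l\overset{*}{\rightharpoonup}v$ in $\m{Linfty0TL2}$, $v_l\rightharpoonup v$ in $\m{L20TH01}$, $\nabla u_l\overset{*}{\rightharpoonup}\nabla u$ in $\m{orspapotQRd}=(\m{orEpot*QRd})^*$, and $\nonl(\nabla u_l)\overset{*}{\rightharpoonup}\chi$ in $\m{orspapot*QRd}=(\m{orEpotQRd})^*$ for some $\chi$ (using separability of $\m{orEpotQRd}$ and $\m{orEpot*QRd}$); lower semicontinuity of the convex modulars gives $\nabla u\in\m{orclapotQRd}$ and $\chi\in\m{orclapot*QRd}$. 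Since $v_l-\widehat v_l\to0$ and $u_l-\widehat u_l\to0$ in $\m{L2Q}$ and $v_l=\partial_t\widehat u_l$, testing against $w\in\m{CcinftyQ}$ identifies $v=\partial_t u$. The Aubin--Lions--Simon lemma applied to $\widehat v_l$ (bounded in $\m{L20TH01}$, with $\partial_t\widehat v_l$ bounded in $\m{L10TX}$, and $\m{H01Om}\stackrel{c}{\hookrightarrow}\m{L2Om}\hookrightarrow X$) yields $v_l\to\partial_t u$ strongly in $\m{L2Q}$, and likewise $\widehat u_l\to u$ in $\m{C0TL2}$. Standard results on weakly continuous representatives then give $u\in\m{Cw0TH01}$, $\partial_t u\in\m{Cw0TL2}$, and, with \eqref{initial data}, the initial conditions $u(\cdot,0)=u_0$ in $V$, $\partial_t u(\cdot,0)=v_0$ in $\m{L2Om}$. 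Testing \eqref{numscheme} against $P_{m_l}w$ for $w\in\m{CcinftyQ}$, summing over $n$ with the natural time weights, moving the difference quotient onto $w$ (the compact support killing the boundary-in-time terms), and letting $l\to\infty$ (using the Galerkin density $P_{m_l}w\to w$ in $V$), one obtains $\int_Q(-\widehat v_l\,\partial_t w+\nabla v_l\cdot\nabla w+\nonl(\nabla u_l)\cdot\nabla w)\dx\dt=\int_Q f_l w\dx\dt$, and since $\nabla w$ is bounded, hence in $\m{orEpotQRd}$, and $f_l\to f$ in $\m{L10TL2}$, each term passes to the limit, yielding the distributional identity of the statement with $\chi$ in place of $\nonl(\nabla u)$.

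\emph{Step 3 (identification $\chi=\nonl(\nabla u)$).} This is the crux. Testing the $n$-th equation of \eqref{numscheme} with $u_l^n$ rather than $v_l^n$, summing over $n$, and using $\nabla v_l^n=(\nabla u_l^n-\nabla u_l^{n-1})/\tau_l$ together with \eqref{(a-b)a rule}, produces an \emph{exact} identity expressing $\int_Q\nonl(\nabla u_l)\cdot\nabla u_l\dx\dt$ through $\int_Q f_l u_l$, endpoint inner products built from $v_l^0,v_l^{N_l},u_l^0,u_l^{N_l}$, the quantities $\norm{2Om}{\nabla u_l^{N_l}}^2$ and $\norm{2Om}{\nabla u_l^0}^2$, $\int_Q v_l(\cdot-\tau_l)\cdot v_l\dx\dt$, and a remainder of order $\tau_l$. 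Passing to the $\limsup$ and using the strong convergences $v_l\to\partial_t u$, $\widehat u_l\to u$, the weak convergences $v_l^{N_l}\rightharpoonup\partial_t u(T)$ and $\nabla u_l^{N_l}\rightharpoonup\nabla u(T)$ in $\m{L2Om}$, and weak lower semicontinuity of $\norm{2Om}{\nabla u_l^{N_l}}^2$, I obtain $\limsup_l\int_Q\nonl(\nabla u_l)\cdot\nabla u_l\dx\dt\le R$ with $R$ explicit. On the other hand, because $\nabla u\in\m{orclapotQRd}$ and $\chi\in\m{orclapot*QRd}$ imply $\chi\cdot\nabla u\in\m{L1Q}$ by the generalized H\"older inequality, the limit equation may be tested with $w=u$ after a modular/mollification approximation of $u$ by smooth fields, and the resulting identity reads $\int_Q\chi\cdot\nabla u\dx\dt=R$. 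Hence
\[
  \limsup_{l\to\infty}\int_Q\nonl(\nabla u_l)\cdot\nabla u_l\dx\dt\le\int_Q\chi\cdot\nabla u\dx\dt .
\]
Monotonicity of $\nonl$ then closes the argument: for every bounded $z$ (so $z\in\m{orEpotQRd}$, $\nonl(z)\in\m{orEpot*QRd}$) one has $\int_Q(\nonl(\nabla u_l)-\nonl(z))\cdot(\nabla u_l-z)\dx\dt\ge0$, which passes to the limit to $\int_Q(\chi-\nonl(z))\cdot(\nabla u-z)\dx\dt\ge0$ for all bounded $z$; inserting truncations $z=\nabla u\,\mathbf{1}_{\{|\nabla u|\le k\}}$ (and small bounded perturbations thereof) and letting $k\to\infty$ by the generalized H\"older inequality and modular convergence yields $\chi=\nonl(\nabla u)$ almost everywhere in $Q$. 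Substituting back into the equation from Step 2 gives the claimed distributional formulation.

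I expect Step 3 to be the main obstacle, essentially for structural reasons: since $\pot$ need not satisfy the $\Delta_2$-condition, $\m{orEpotQRd}$ is a proper subspace of $\m{orspapotQRd}$, modular and norm convergence differ, the space $\m{orspapotQRd}$ is neither reflexive nor separable, and --- crucially --- $\m{orspapotQRd}\neq\m{orspapot0TorspapotOmRd}$, so the problem cannot be recast as an operator differential equation in time and the entire analysis has to be carried out directly on the cylinder $Q$. As a consequence, both the admissibility of $w=u$ as a test function in the limit equation and the passage $z\to\nabla u$ in the Minty argument require the truncation and modular-approximation techniques of Gossez, rather than the routine density arguments available in the polynomial-growth ($\m{Lp}$) setting. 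The remaining ingredients --- the a priori bounds, weak-$*$ compactness, the Aubin--Lions--Simon lemma, weak continuity in time, and the passage to the limit in the linear terms --- are standard.
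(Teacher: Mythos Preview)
Your proposal is correct and follows essentially the same route as the paper: uniform bounds from \eqref{dape}--\eqref{dapeII} and the growth condition, weak-$*$ compactness in the Orlicz spaces over $Q$, strong $L^2(Q)$-compactness of $\widehat v_l$ via Aubin--Lions--Simon, passage to the limit in the scheme, and identification of the nonlinear term by the Minty--Gossez truncation argument on the level sets $\{|\nabla u|\le k\}$. The only cosmetic differences are that the paper tests with $u_l-u_l^0$ rather than $u_l$ in Step~3 (which merely shifts by a fixed admissible function) and makes explicit the mild coupling hypothesis $\tau_l\norm*{\m{H01Om}}{v_l^0}^2\le C$ needed to bound $\widehat v_l$ (as opposed to $v_l$) in $\m{L20TH01}$ for the compactness step.
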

This solution is the limit of a subsequence, denoted by $l$ throughout this paper, of approximate solutions constructed from \eqref{numscheme} in the following sense:
The piecewise constant and piecewise linear temporal interpolation $u_l$ and $\widehat{u}_l$ converge weakly* in $\m{Linfty0TH01}$ and strongly in $\m{C0TL2}$, respectively, towards $u$. The piecewise constant temporal interpolation $v_l$ of the discrete time derivatives converges weakly* in the space $\m{Linfty0TL2}$ and weakly in $\m{L20TH01}$ towards $\partial_t u$ and the piecewise linear in time interpolation $\widehat{v}_l$ converges strongly in $\m{L2Q}$ towards $\partial_t u$. Moreover, $\nabla u_l$ converges weakly in $\m{orspapotQRd}$ towards $\nabla u$ and $\nonl(\nabla u_l)$ converges weakly* in $\m{orspapot*QRd}$ towards $\nonl(\nabla u)$.

\begin{lem}[Convergence of subsequences I]\label{conv subseq I}
	Let $u_0\in V$, $v_0\in\m{L2Om}$ and $f\in\m{L10TL2}$ and let the approximations of the initial values $(u_l^0)$ and $(v_l^0)$ satisfy \eqref{initial data}. Then there exists a subsequence, still denoted by $l$, and some $u\in\m{Cw0TH01}$ with $\partial_t u\in\m{Linfty0TL2}\cap\m{L20TH01}$ and $\nabla u\in\m{orclapotQRd}$, as well as $\xi\in\m{H01Om}$ and $\zeta\in\m{L2Om}$ such that, as $l\to\infty$,
	\begin{enumerate}[label=(\Roman*)]
	\item $u_l\wkconv*{*}{} u$ in $\m{Linfty0TH01}$, \label{ul wk* convergence}
	\item $\widehat{u}_l-u_l \to 0$ in $\m{L20TH01}$, \label{uhatl - ul strong convergence}
	\item $\widehat{u}_l\to u$ in $\m{C0TL2}$, \label{uhatl strong convergence}
	\item $v_l\wkconv*{*}{} \partial_{t}u$ in $\m{Linfty0TL2}$,\label{vl wk* convergence}
	\item $\widehat{v}_l-v_l\to 0$ in $\m{L2Q}$,\label{vhatl - vl strong convergence}
	\item $v_l\wkconv{}{} \partial_{t}u$ in $\m{L20TH01}$, \label{vl wk convergence}
	\item $\nabla u_l\wkconv*{*}{} \nabla u$ in $\m{orspapotQRd}$,\label{nabla ul wk* convergence}
	\item $\widehat{u}_l(T)\wkconv{}{}\xi$ in $\m{H01Om}$ and $\widehat{v}_l(T)\wkconv{}{}\zeta$ in $\m{L2Om}$.
\end{enumerate}
\end{lem}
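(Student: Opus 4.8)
First I would collect uniform bounds. From \eqref{initial data}, $\|v_l^0\|_{2,\Omega}$ is bounded, and since $\nabla u_l^0\to\nabla u_0$ in $\m{orEpotOmRd}$ — a linear subspace of the Orlicz class — convexity of $\rho_{\pot,\Omega}$ gives $\sup_l\int_\Omega\pot(\nabla u_l^0)\dx<\infty$; moreover the growth hypothesis $\pot^*(\nonl(A))\le C(1+\pot(A))$ turns \eqref{dape} into a uniform bound for $\max_n\rho_{\pot^*,\Omega}(\nonl(\nabla u_l^n))$, hence (via $\|\cdot\|_{\pot^*,\Omega}\le\rho_{\pot^*,\Omega}(\cdot)+1$) for $\max_n\|\nonl(\nabla u_l^n)\|_{\pot^*,\Omega}$, so the right-hand side of \eqref{dapeII} is bounded too. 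Reading off \eqref{dape} and \eqref{dapeII} one obtains, uniformly in $l$: $\max_n\|v_l^n\|_{2,\Omega}$ bounded, hence $v_l,\widehat v_l$ bounded in $\m{Linfty0TL2}$; $\tau_l\sum_j\|\nabla v_l^j\|_{2,\Omega}^2$ bounded, hence $v_l$ bounded in $\m{L20TH01}$; $\sum_j\|v_l^j-v_l^{j-1}\|_{2,\Omega}^2$ bounded; through $u_l^n=u_l^0+\tau_l\sum_{j\le n}v_l^j$ and Cauchy--Schwarz in $j$, $\max_n\|\nabla u_l^n\|_{2,\Omega}$ bounded, hence $u_l,\widehat u_l$ bounded in $\m{Linfty0TH01}$ and $\widehat u_l$ bounded in $\mathrm{W}^{1,\infty}(0,T;\m{L2Om})$; and $\rho_{\pot,Q}(\nabla u_l)=\tau_l\sum_n\int_\Omega\pot(\nabla u_l^n)\dx\le T\,C$, hence $\nabla u_l$ bounded in $\m{orspapotQRd}$.

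Next I would extract subsequences. As $\m{H01Om}$ and $\m{L2Om}$ are separable and reflexive, $\m{Linfty0TH01}$ and $\m{Linfty0TL2}$ are dual spaces with separable predual, and $\m{orspapotQRd}=(\m{orEpot*QRd})^*$ with $\m{orEpot*QRd}$ separable; Banach--Alaoglu plus a diagonal argument yield one subsequence (still written $l$) along which $u_l\to\tilde u$ weakly-$*$ in $\m{Linfty0TH01}$, $v_l\to\bar v$ weakly-$*$ in $\m{Linfty0TL2}$ and (reflexivity) weakly in $\m{L20TH01}$ with the same limit since limits in $\mathscr{D}'$ are unique (this is already \ref{vl wk convergence}), $\nabla u_l\to\chi$ weakly-$*$ in $\m{orspapotQRd}$, and $\widehat u_l(T)\to\xi$ weakly in $\m{H01Om}$, $\widehat v_l(T)\to\zeta$ weakly in $\m{L2Om}$ (item~(VIII)). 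The interpolation-gap statements \ref{uhatl - ul strong convergence} and \ref{vhatl - vl strong convergence} are immediate: for $t\in(t_{n-1},t_n]$ one has $\widehat u_l(t)-u_l(t)=-(t_n-t)v_l^n$ and $\widehat v_l(t)-v_l(t)=-\tfrac{t_n-t}{\tau_l}(v_l^n-v_l^{n-1})$, whence $\|\widehat u_l-u_l\|_{\m{L20TH01}}^2\le\tfrac13\tau_l^2\big(\tau_l\sum_n\|v_l^n\|_{\m{H01Om}}^2\big)\to0$ and $\|\widehat v_l-v_l\|_{\m{L2Q}}^2=\tfrac13\tau_l\sum_n\|v_l^n-v_l^{n-1}\|_{2,\Omega}^2\to0$ by the bounds just obtained.

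Finally I would pass to strong convergence and identify the limits. Being bounded in $\mathrm{W}^{1,\infty}(0,T;\m{L2Om})$, the $\widehat u_l$ are equi-Lipschitz hence equicontinuous as maps $[0,T]\to\m{L2Om}$, while each $\{\widehat u_l(t)\}_l$ is bounded in $\m{H01Om}\stackrel{c}{\hookrightarrow}\m{L2Om}$ ($\Omega$ bounded Lipschitz, Rellich--Kondrachov) and so relatively compact in $\m{L2Om}$; Arzelà--Ascoli gives a further subsequence with $\widehat u_l\to u$ in $\m{C0TL2}$, proving \ref{uhatl strong convergence}. Together with the $\m{Linfty0TH01}$-bound this forces $\widehat u_l\to u$ weakly-$*$ in $\m{Linfty0TH01}$, and since by \ref{uhatl - ul strong convergence} $\widehat u_l-u_l\to0$ in $\m{L20TH01}$ while both stay bounded in $\m{Linfty0TH01}$ (so the weak-$*$ limit is unchanged, by testing against the dense subspace $\mathrm{L}^2(0,T;\mathrm{H}^{-1}(\Omega))$ of $\mathrm{L}^1(0,T;\mathrm{H}^{-1}(\Omega))$) one gets $u_l\to u$ weakly-$*$ in $\m{Linfty0TH01}$, i.e.\ $\tilde u=u$ and \ref{ul wk* convergence}. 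Letting $l\to\infty$ in $\partial_t\widehat u_l=v_l$ (an identity in $\mathscr{D}'$) and using \ref{uhatl strong convergence}, \ref{vl wk* convergence} gives $\partial_t u=\bar v$, so $\partial_t u\in\m{Linfty0TL2}\cap\m{L20TH01}$, which finishes \ref{vl wk* convergence} and \ref{vl wk convergence}; and $u\in\m{Linfty0TH01}\cap\m{C0TL2}$ admits a representative in $\m{Cw0TH01}$ by the standard lemma on weakly continuous vector-valued functions. Since $u_l\to u$ weakly-$*$ in $\m{Linfty0TH01}$ implies $\nabla u_l\to\nabla u$ weakly in $\mathrm{L}^2(Q;\R^d)$, and $\mathscr{C}_c^\infty(Q;\R^d)\subset\m{orEpot*QRd}\cap\mathrm{L}^2(Q;\R^d)$, the two limits of $\nabla u_l$ agree on test functions, so $\chi=\nabla u$ and \ref{nabla ul wk* convergence} holds; finally $\nabla u\in\m{orclapotQRd}$ follows from weak-$*$ lower semicontinuity of $\rho_{\pot,Q}$, $\rho_{\pot,Q}(\nabla u)\le\liminf_l\rho_{\pot,Q}(\nabla u_l)\le T\,C<\infty$, which one sees either by writing $\rho_{\pot,Q}$ as a supremum of weak-$*$ continuous affine functionals $\xi\mapsto\int_Q(\xi\cdot\eta-\pot^*(\eta))\dx\dt$ over $\eta\in\m{orEpot*QRd}$, or, alternatively, by observing that boundedness in $\m{orspapotQRd}$ together with superlinearity of $\pot$ forces $\{\nabla u_l\}$ to be uniformly integrable (generalized Hölder against indicators), so that $\nabla u_l\to\nabla u$ weakly in $\mathrm{L}^1(Q;\R^d)$ by Dunford--Pettis and the usual lower semicontinuity of the convex integral functional applies.

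The step I expect to be the main obstacle is the Orlicz part of this programme: the weak-$*$ compactness of $\{\nabla u_l\}$ cannot rely on reflexivity and must be obtained through the separable predual $\m{orEpot*QRd}$, and one must separately verify that the limit lands in the Orlicz \emph{class} $\m{orclapotQRd}$, not merely in the Orlicz \emph{space} $\m{orspapotQRd}$. This is precisely where the space-time-cylinder theory of Section~2 is indispensable, the naive reduction to a Bochner space in time failing because $\m{orspapotQRd}\neq\m{orspapot0TorspapotOmRd}$; everything else is a routine convergence analysis of the backward Euler scheme.
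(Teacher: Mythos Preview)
Your proof is correct and follows essentially the same route as the paper's: uniform bounds from the a priori estimate \eqref{dape}, weak-/weak-$*$ compactness in the relevant (separable-predual) spaces, explicit computation of the interpolation gaps for (II) and (V), Arzel\`a--Ascoli for (III), identification of $\chi=\nabla u$ on test functions, and lower semicontinuity of the modular for the Orlicz-class membership. One minor point: you invoke the growth hypothesis on $\nonl$ and the estimate \eqref{dapeII}, but neither is an assumption of this lemma (they enter only in Lemma~\ref{conv subseq II}) and nothing in your argument for (I)--(VIII) actually uses them, so that passage is superfluous here.
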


\begin{rem}
	Under the assumptions of Theorem \ref{conv subseq I}, a subsequence of $(u_l)$ converges strongly in $\m{Linfty0TL2}$ towards $u$ because
	\begin{equation*}
		\norm*{\m{Linfty0TL2}}{u_l-\widehat{u}_l} \leq \tau_l\norm*{\m{Linfty0TL2}}{v_l}\to 0\text{ as }l\to\infty.
	\end{equation*}
	Moreover, if $X$ is an intermediate Banach space between $\m{L2Om}$ and $\m{H01Om}$ in the sense of Lions and Peetre \cite{lions1964classe} such that there exists $C>0$ and $\theta\in (0,1)$ with
	\begin{equation*}
		\norm*{X}{w}\leq C \norm{2Om}{\nabla w}^{\theta}\norm{2Om}{w}^{1-\theta}\quad\text{for all }w\in\m{H01Om},
	\end{equation*}
	then $(\widehat{u}_l)$ is a Cauchy sequence and thus converges strongly in $\m{C0TX}$ towards $u$. As before, $(u_l)$ converges strongly in $\m{Linfty0TX}$ towards $u$.
\end{rem}

\begin{proof}[Proof of Theorem \ref{conv subseq I}]
	In view of \eqref{initial data}, the right hand side in the a priori estimate \eqref{dape} is bounded. 
	Because $u_l^n=u_l^0+\tau_l\sum_{j=1}^n v_l^j$, we find
	\begin{align*}
		\norm{2Om}{\nabla u_l^n} &\leq \norm{2Om}{\nabla u_l^0} +\tau_l\sum_{j=1}^{n}\norm{2Om}{\nabla v_l^n} 
		\leq \norm{2Om}{\nabla u_l^0} +C \left(\tau_l\sum_{j=1}^{n}\norm{2Om}{\nabla v_l^j}^2\right)^{\frac{1}{2}}.
	\end{align*}
	Thus, as a consequence of the discrete a priori estimate \eqref{dape}, the sequences $(u_l)_{l\in\N},(\widehat{u}_l)_{l\in\N}$ are bounded in $\m{Linfty0TH01}$ and also $(v_l)_{l\in\N},(\widehat{v}_l)_{l\in\N}$ are bounded in $\m{Linfty0TL2}$. Thus, there are a subsequence, still denoted by $l$, and elements $u,\widehat{u}\in\m{Linfty0TH01},\,v,\widehat{v}\in\m{Linfty0TL2}$ such that
	\begin{align*}
		u_l&\wkconv*{*}{}u,\,\widehat{u}_l\wkconv*{*}{}\widehat{u} \text{ in }\m{Linfty0TH01},\\
		v_l&\wkconv*{*}{}v,\,\widehat{v}_l\wkconv*{*}{}\widehat{v}\text{ in }\m{Linfty0TL2}.
	\end{align*}
	In view of \eqref{dape}
	\begin{equation*}
		\norm*{\m{L20TH01}}{\widehat{u}_l-u_l}^2=\frac{\tau_l^3}{3}\sum_{n=1}^{N_l}\norm*{\m{L2Om}}{\nabla v_l^n}^2 \to 0,
	\end{equation*}
	we find that $\widehat{u}_l-u_l\to 0$ in $\m{L20TH01}$ and thus $\widehat{u}=u$. Similarly,
	\begin{equation*}
		\norm*{\m{L2Q}}{\widehat{v}_l-v_l}^2=\frac{\tau_l}{3}\sum_{n=1}^{N_l}\norm*{\m{L2Om}}{ v_l^n -v_l^{n-1}}^2 \to 0,
	\end{equation*}
	and thus $\widehat{v}=v$.
	Because by definition $v_l=\partial_t \widehat{u}_l$, we immediately find $v=\partial_t u$.

	The sequence $(\widehat{u}_l)\subset\m{C0TL2}$ is equicontinuous because $(\partial \widehat{u}_l)=(v_l)$ is bounded in $\m{Linfty0TL2}$ and $(\widehat{u}_l(t))\subset\m{H01Om}$ is bounded in $\m{H01Om}$ and hence relatively compact in $\m{L2Om}$ for every $t\in[0,T]$. An application of Arzel\`{a}-Ascoli's theorem implies strong convergence in the space $\m{C0TL2}$ of a subsequence (again still denoted by $l$), and the limit can only be the weak*-limit $u$. 
	We have seen that $u\in\m{Linfty0TH01}$ with $\partial_t u\in\m{Linfty0TL2}$ such that $u\in\m{AC0TL2}$, and in view of \cite[Lemma 8.1 on p. 297]{lions2012non} $u\in\m{Cw0TH01}$.

	Consequently, we have proved the first five statements. The sixth follows directly as
	\begin{equation*}
		\norm*{\m{L20TH01}}{v_l}^2 = \tau_l\sum_{n=1}^{N_l}\norm{2Om}{\nabla v_l^n}^2
	\end{equation*}
	and the right hand side is bounded due to the a priori estimate \eqref{dape}. Hence, we can extract a subsequence (still denoted by $l$) and $\widetilde{v}\in\m{L20TH01}$ such that $v_l\wkconv{}{}\widetilde{v}$ in $\m{L20TH01}$. As before, we can show $\widetilde{v}=\partial_t u$.

	In view of the discrete a priori estimate \eqref{dape} we observe that
	\begin{equation*}
		\int_Q \pot(\nabla u_l)\dx\dt = \tau_l\sum_{n=1}^{N_l}\int_{\Omega}\pot(\nabla u_l^n)\dx
	\end{equation*}
	is uniformly bounded. However, from the boundedness of the modular boundedness of the Luxemburg norm follows. Therefore, the sequence $(\nabla u_l)$ is bounded in $\m{orspapotQRd}$, the dual of the separable space $\m{orEpot*QRd}$. Thus, we can extract a subsequence (still denoted by $l$) such that $\nabla u_l\wkconv*{*}{}\chi$ in $\m{orspapotQRd}$ for some $\chi$. In view of the sequential lower semicontinuity of the modular in $\m{L1QRd}$ as shown in \cite[Lemma 2.2]{emmrich2013convergence}, we have $\chi\in\m{orclapotQRd}$.

	Since $\m{CcinftyOmRd}\otimes\m{Ccinfty0T}\subseteq \m{orEpot*QRd}$ we find for all functions $\Phi\in\m{CcinftyOmRd}$ and $\Psi\in\m{Ccinfty0T}$ with integration by parts and \ref{ul wk* convergence}
	\begin{align*}
		\int_Q \chi\cdot \Phi\Psi\dx\dt &= \lim_{l\to\infty}\int_Q \nabla u_l\cdot \Phi\Psi\dx\dt\\
		&= -\lim_{l\to\infty} \int_Q u_l\nabla\cdot\Phi\Psi\dx\dt = -\int_Q u\nabla\cdot\Phi\Psi\dx\dt
	\end{align*}
	and thus $\chi=\nabla u$.

	Lastly, with the same argument as in \ref{ul wk* convergence}, the sequence $(\widehat{u}_l(\cdot,T))_{l\in\N}$ (with $\widehat{u}_l(\cdot,T)=u_l^{N_l}=u_l(\cdot,T)$) is bounded in $\m{H01Om}$ and the sequence $(\widehat{v}_l(\cdot,T))_{l\in\N}$ is bounded in $\m{L2Om}$. Thus there exist $\xi\in\m{H01Om}$ and $\zeta\in\m{L2Om}$ and subsequences (still denoted by $l$) such that $\widehat{u}(T)\wkconv{}{}\xi$ and $\widehat{v}_l(T)\wkconv{}{}\zeta$ in $\m{H01Om}$ and $\m{L2Om}$, respectively.

\end{proof}

\begin{lem}[Convergence of subsequences II]\label{conv subseq II}
	Let the assumptions of Lemma \ref{conv subseq I} be satisfied and let $\nonl$ fulfill the growth condition
	\begin{equation*}
		\pot^*(\nonl(A))\leq C(1+\pot(A))\quad \text{for all }A\in\R^{d}.
	\end{equation*}
	Additionally, assume that there is a constant $C>0$ such that the approximations of the initial value $v_0$ satisfy $\tau_l\norm*{\m{H01Om}}{v_l^0}^2<C$ for all $l\in\N$. Then, there exists $\alpha\in\m{orclapot*QRd}$ and a subsequence (still denoted by $l$) such that
	\begin{enumerate}[label=(\Roman*),start=9]
		\item $\nonl(\nabla u_l)\wkconv*{*}{}\alpha$ in $\m{orspapot*QRd}$ und\label{nonl weak* convergence}
		\item $\widehat{v}_l\to \partial_{t}u$ in $\m{L2Q}$.\label{vhat strong convergence}
	\end{enumerate}
\end{lem}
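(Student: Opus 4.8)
The plan is to establish \ref{nonl weak* convergence} and \ref{vhat strong convergence} independently, both as consequences of the discrete a priori estimates \eqref{dape} and \eqref{dapeII}.

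For \ref{nonl weak* convergence} I would repeat, for the sequence $(\nonl(\nabla u_l))$ and with $\pot$ replaced by $\pot^*$, the compactness argument already carried out for $(\nabla u_l)$ in the proof of Lemma \ref{conv subseq I}. Integrating the growth condition along the discrete solution yields
\begin{equation*}
\rho_{\pot^*,Q}(\nonl(\nabla u_l)) = \int_Q\pot^*(\nonl(\nabla u_l))\dx\dt \leq C\left(1+\int_Q\pot(\nabla u_l)\dx\dt\right),
\end{equation*}
and $\int_Q\pot(\nabla u_l)\dx\dt=\tau_l\sum_{n=1}^{N_l}\int_\Omega\pot(\nabla u_l^n)\dx$ is uniformly bounded by \eqref{dape}, as already observed in the proof of Lemma \ref{conv subseq I}. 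Hence the modular, and therefore the Luxemburg norm, of $\nonl(\nabla u_l)$ in $\m{orspapot*QRd}$ is bounded. Since $\pot^{**}=\pot$, this space is the dual of the separable space $\m{orEpotQRd}$, so by the sequential Banach--Alaoglu theorem a subsequence satisfies $\nonl(\nabla u_l)\wkconv*{*}{}\alpha$ in $\m{orspapot*QRd}$ for some $\alpha$. To see that $\alpha\in\m{orclapot*QRd}$ I would argue exactly as for $\chi=\nabla u$ in Lemma \ref{conv subseq I}: the superlinear growth of $\pot^*$ together with the modular bound makes $(\nonl(\nabla u_l))$ uniformly integrable, hence (Dunford--Pettis) weakly relatively compact in $\m{L1QRd}$; testing against $\m{CcinftyOmRd}\otimes\m{Ccinfty0T}\subset\m{orEpotQRd}$ identifies the weak $\mathrm{L}^1$-limit with $\alpha$; and the sequential lower semicontinuity of the modular on $\m{L1QRd}$ \cite[Lemma 2.2]{emmrich2013convergence} gives $\rho_{\pot^*,Q}(\alpha)\leq\liminf_{l\to\infty}\rho_{\pot^*,Q}(\nonl(\nabla u_l))<\infty$.

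For \ref{vhat strong convergence} I would apply an Aubin--Lions--Simon compactness argument to the piecewise-linear interpolants $(\widehat{v}_l)$ along the chain $\m{H01Om}\stackrel{c}{\hookrightarrow}\m{L2Om}\hookrightarrow(\m{Hr})^*$, the first embedding being compact by Rellich--Kondrachov on the bounded Lipschitz domain and the second holding since $\m{Hr}$ is densely embedded in $\m{L2Om}$. Two uniform bounds are required. First, since $\partial_t\widehat{v}_l$ is the constant $(v_l^n-v_l^{n-1})/\tau_l$ on $(t_{n-1},t_n)$, its norm in $\mathrm{L}^1(0,T;(\m{Hr})^*)$ equals $\tau_l\sum_{n=1}^{N_l}\norm*{(\m{Hr})^*}{(v_l^n-v_l^{n-1})/\tau_l}$, which \eqref{dapeII} bounds; the right-hand side of \eqref{dapeII} is uniformly bounded in $l$ because \eqref{initial data} controls the initial-data contributions and, for the term $\max_n\norm*{\pot^*,\Omega}{\nonl(\nabla u_l^n)}$, the growth condition together with the modular bound $\rho_{\pot,\Omega}(\nabla u_l^n)\leq C$ from \eqref{dape} gives $\norm*{\pot^*,\Omega}{\nonl(\nabla u_l^n)}\leq\rho_{\pot^*,\Omega}(\nonl(\nabla u_l^n))+1\leq C$. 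Second, since $\widehat{v}_l(t)$ is a convex combination of $v_l^{n-1}$ and $v_l^n$ on $[t_{n-1},t_n]$, convexity of $w\mapsto\norm{2Om}{\nabla w}^2$ gives
\begin{equation*}
\int_0^T\norm{2Om}{\nabla\widehat{v}_l(t)}^2\dt \leq \tau_l\sum_{n=0}^{N_l}\norm{2Om}{\nabla v_l^n}^2,
\end{equation*}
where the terms with $n\geq1$ are bounded by \eqref{dape} and the term $\tau_l\norm{2Om}{\nabla v_l^0}^2$ is precisely the one controlled by the extra hypothesis $\tau_l\norm*{\m{H01Om}}{v_l^0}^2<C$; hence $(\widehat{v}_l)$ is bounded in $\m{L20TH01}$. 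Simon's compactness theorem then yields a subsequence with $\widehat{v}_l\to w$ strongly in $\m{L20TL2}=\m{L2Q}$ for some $w$, and since \ref{vhatl - vl strong convergence}--\ref{vl wk convergence} of Lemma \ref{conv subseq I} already give $\widehat{v}_l\wkconv{}{}\partial_t u$ in $\m{L2Q}$, necessarily $w=\partial_t u$.

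The main obstacle is \ref{vhat strong convergence}: one has to feed the piecewise-linear interpolants $\widehat{v}_l$ --- rather than the piecewise-constant $v_l$, which are not weakly differentiable in time --- into the compactness theorem, and to check that $\partial_t\widehat{v}_l$ lands in $\mathrm{L}^1(0,T;(\m{Hr})^*)$ with a bound uniform in $l$; this is exactly where both a priori estimates and the hypothesis $\tau_l\norm*{\m{H01Om}}{v_l^0}^2<C$ enter, the latter only to control $\tau_l\norm{2Om}{\nabla v_l^0}^2$. By contrast, \ref{nonl weak* convergence} is a routine repetition of the argument already used for $\nabla u_l$, the only new ingredient being the growth condition, which transfers the modular bound from $\nabla u_l$ to $\nonl(\nabla u_l)$.
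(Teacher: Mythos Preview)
Your proposal is correct and follows essentially the same route as the paper: for \ref{nonl weak* convergence} both you and the paper transfer the modular bound via the growth condition and invoke weak* compactness in the dual of a separable Orlicz space together with lower semicontinuity of the modular; for \ref{vhat strong convergence} both arguments bound $(\widehat{v}_l)$ in $\m{L20TH01}$ (using the extra hypothesis to control $\tau_l\norm{2Om}{\nabla v_l^0}^2$) and $(\partial_t\widehat{v}_l)$ in a Bochner space with values in $(\m{Hr})^*$, then apply an Aubin--Lions--type compactness result along $\m{H01Om}\stackrel{c}{\hookrightarrow}\m{L2Om}\hookrightarrow(\m{Hr})^*$. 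The only cosmetic difference is that you bound $\partial_t\widehat{v}_l$ in $\mathrm{L}^1(0,T;(\m{Hr})^*)$ and cite Simon, whereas the paper states the bound in $\m{L20THr*}$ and cites the generalized Lions--Aubin lemma; your choice matches \eqref{dapeII} more directly.
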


Note that the additional assumption $\tau_l\norm*{\m{H01Om}}{v_l^0}^2<C$ is fulfilled by the projections of $v_0$ onto the spaces $V_{m_l}$ if we couple the time and space discretization parameters appropriately.

\begin{proof}
	Using the growth condtion, we find
	\begin{equation*}
		\int_Q \pot^*(\nonl(\nabla u_l))\dx\dt \leq C\int_Q (1+\pot(\nabla u_l))\dx\dt
	\end{equation*}
	and as seen in the previous proof the right-hand side is bounded. Thus, $(\nonl(\nabla u_l))$ is bounded in $\m{orspapot*QRd}$, the dual of the separable space $\m{orEpotQRd}$. Therefore, we can extract a subsequence (still denoted by $l$) such that $(\nonl(\nabla u_l))$ converges weakly* towards some $\alpha\in\m{orspapot*QRd}$. Again, using the weak sequential lower semicontinuity of the modular in $\m{L1QRd}$ we find $\alpha\in\m{orclapot*QRd}$.

	For the second statement, recall that the sequence $(v_l)$ is bounded in $\m{L20TH01}$. A simple calculation then shows
	\begin{equation*}
		\norm*{\m{L20TH01}}{\widehat{v}_l}^2 \leq C  \left(\tau_l \norm*{\m{H01Om}}{v_l^0}^2 + \tau_l\sum_{n=1}^{N_l} \norm*{\m{H01Om}}{v_l^n}^2 \right)
	\end{equation*}
	and because of the assumption $\tau_l\norm*{\m{H01Om}}{v_l^0}^2\leq C$ and the discrete a priori estimate \eqref{dape}, we thus find that also $(\widehat{v}_l)$ is bounded in $\m{L20TH01}$.

	Considering the time derivative $\partial_t \widehat{v}_l$ using the discrete a priori estimate \eqref{dapeII}, we find
	\begin{align*}
		\norm*{\m{L20THr*}}{\partial_t\widehat{v}_l}^2 &= \tau_l\sum_{n=1}^{N_l}\norm*{(\m{Hr})^*}{\frac{v_l^n-v_l^{n-1}}{\tau_l}}^2\\
		&\leq C \Bigg( \norm{2Om}{v^{0}} +\int_{\Omega}\pot(\nabla u^0)\dx +\norm{L10TL2}{f} 
		+ \max_{n=1,\ldots,N}\norm*{\pot^*,\Omega}{\nonl(\nabla u^n)}\Bigg).
	\end{align*}
	As seen before (recall that $\norm{pot*Om}{\cdot}\leq 1+\rho_{\pot^*,\Omega}(\cdot)$) using the growth condition we find that the right-hand side is bounded. Considering the scale of spaces
	\begin{equation}
		\m{H01Om}\stackrel{c}{\hookrightarrow} \m{L2Om}\hookrightarrow (\m{Hr})^*
	\end{equation}
	we have seen that the sequence $(\widehat{v}_l)$ is bounded in the space
	\begin{align*}
		\mathscr{Z}&:=\{w\in \m{L20TH01} |\exists \,\partial_t w\in\m{L20THr*}\}, \\
			\intertext{equipped with the norm}
			\norm*{\mathscr{Z}}{w} &:= \norm*{\m{L20TH01}}{w} + \norm*{\m{L20THr*}}{\partial_t w}.
	\end{align*}
	The generalized Lions-Aubin lemma (see \cite[Lemma 7.7]{roubivcek2013nonlinear}) implies that $\mathscr{Z}$ is compactly embedded in $\m{L20TL2}=\m{L2Q}$. Thus, there exists a subsequence (still denoted by $l$) that converges strongly. Because of lemma \eqref{conv subseq I} \ref{vhatl - vl strong convergence}, the limit can only be $\partial_t u$.
\end{proof}

\begin{proof}[Proof of theorem \ref{main thm}]
	Using the piecewise constant and piecewise linear interpolation in time, the numerical scheme \eqref{numscheme} can be rewritten as
	\begin{align}
	\int_{\Omega} \left( \partial_{t}\widehat{v}_l(\cdot,t)\psi + \nabla v_l(\cdot,t)\cdot \nabla \psi +\nonl(\nabla u_l(\cdot,t))\cdot\nabla \psi\right) \dx=\int_{\Omega} f_l(\cdot,t) \psi \dx, \label{numsch2}
	\end{align}
	for all $\psi\in V_{m_l}$, which holds almost everywhere as well as in the weak sense on $(0,T)$, such that
	\begin{align*}
	&\int_{\Omega} \left( \widehat{v}_l(\cdot,T)\psi \Psi(T)-\widehat{v}_l(\cdot,0)\psi \Psi(0) \right)\dx \\
	&+ \int_Q \left( -\widehat{v}_l \psi \Psi' + \nabla v_l \cdot \nabla \psi \Psi +\nonl(\nabla u_l) \cdot \nabla \psi \Psi\right) \dx\dt
  	=\int_Q f_l \psi \Psi\dx\dt,
	\end{align*}
	for all $\psi\in V_{m_l}$ and $\Psi\in\m{C10T}$. Taking $\psi=R_{m_l} w$ for arbitrary $w\in V$, where $R_{m_l}$ is a restriction operator such that
	\begin{equation}
		R_{m_l}w\to w\quad\text{in }V\text{ as }l\to\infty\text{ for all }w\in V \label{restriction operator}
	\end{equation}
	(see also \cite[pp. 13 ff]{temam1973}), and employing the weak and weak* convergence shown in lemma \ref{conv subseq I} and \ref{conv subseq II}, the strong convergence of $f_l$ in $\m{L10TL2}$ towards $f$ (which follows from standard arguments) and the strong convergence of $\widehat{v}_l(\cdot,0)=v_l^0$ in $\m{L2Om}$ to $v_0$, we obtain the limit equation
	\begin{equation}
		\int_{\Omega}\left( \zeta w\Psi(T)- v_0  w\Psi(0) \right) \dx 
		+\int_{Q} \left(-\partial_{t}u w\Psi' + \nabla \partial_{t}u \cdot \nabla w\Psi+ \alpha \cdot \nabla w\Psi \right) \dx\dt
  		= \int_{Q} f w\Psi\dx\dt 
  		\label{GG1} 
	\end{equation}
	for all $w\in V$ and $\Psi\in\m{C10T}$. To be precise, we have used that, as $l\to\infty$,
	\begin{align*}
		P_{m_l}w &\to w \text{ in }\m{L2Om},\\
		P_{m_l}w \Psi' &\to w\Psi'\text{ in }\m{L10TL2},\\
		P_{m_l}w\Psi &\to w\Psi\text{ in }\m{L20TH01},\\
		\nabla P_{m_l}w\Psi &\to\nabla w\Psi\text{ in }\m{orEpotQRd},\\
		P_{m_l}w\Psi &\to w\Psi\text{ in }\m{Linfty0TL2}.
	\end{align*}
	The convergences above follow from \eqref{restriction operator} and the definition of the norm in $V$. Note also that
	\begin{equation*}
		\norm{potOm}{\nabla R_{m_l}w\Psi -\nabla w \Psi}\leq \max(1,T)\norm*{\m{C0T}}{\Psi}\norm{potOm}{\nabla R_{m_l}w-\nabla w}.
	\end{equation*}
	The limit equation \eqref{GG1} shows that
	\begin{equation}
		\frac{\operatorname{d}}{\operatorname{dt}} \int_{\Omega} \partial_{t}u w\dx = \int_{\Omega} \left( f w-\nabla \partial_{t}u \cdot \nabla w - \alpha \cdot\nabla u \right)\dx\qquad \text{for all }w\in V, \label{GG2}
	\end{equation}
	in the weak sense on $(0,T)$. The right-hand side in \eqref{GG2} is in $\m{L10T}$ because $f\in\m{L10TL2}$, $u\in\m{Linfty0TH01}$ and $\alpha\in\m{orclapot*QRd}\subseteq\m{L10Torspapot*OmRd}$.
	Since we already know that $\partial_{t}u\in\m{Linfty0TL2}$, this shows that the mapping $t\mapsto \int_{\Omega} \partial_{t}u(x,t)w(x)\dx$ is absolutely continuous on $[0,T]$ for every $w\in V$.
	Because $V$ is dense in $\m{L2Om}$ and $\partial_{t}u\in\m{Linfty0TL2}$ the mapping $t\mapsto\int_{\Omega} \partial_{t}u(x,t)w(x)\dx$ is also continuous on $[0,T]$ for every $w\in\m{L2Om}$ so that $\partial_{t}u\in\m{Cw0TL2}$.
	
	For the last step of the proof, it will be crucial to use the limit equation \eqref{GG1} not only for test functions in $V\otimes \m{C10T}$, but for a more general class of test functions.
	We will use the following approximation result almost identical to \cite[Lemma 4.3]{emmrich2014equations}.
	\begin{lem}
	Let
	\begin{equation*}
		w\in\mathscr{W} := \{ w\in \m{W110TL2}\cap\m{L20TH01}| \nabla w\in\m{orclapotQRd} \}.
	\end{equation*}
	Then for any $\eps>0$, there exists a function $w_{\eps}\in V\otimes\m{C10T}$ such that
	\begin{align*}
		\norm*{\m{W110TL2}}{w_{\eps}-w} &<\eps & \norm*{\m{L20TH01}}{w_{\eps}- w} &<\eps,
	\end{align*}
	and for all $\eta\in\m{orspapot*QRd}$
	\begin{equation*}
		\left| \int_Q \eta \cdot \nabla w_{\eps} -\int_Q \eta \cdot \nabla w \dx\dt\right| <\eps.
	\end{equation*}
\end{lem}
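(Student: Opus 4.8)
\emph{Strategy.} I would obtain $w_{\eps}$ by regularizing $w$ in three stages---mollification in time, a Gossez-type regularization in space, and a Galerkin projection---arranged so that the gradients converge with respect to the \emph{modular} $\rho_{\pot,Q}$ rather than the Luxemburg norm of $\m{orspapotQRd}$, the latter being hopeless since $\m{CcinftyOm}$ is not dense in $\m{orspapotQRd}$ unless $\pot$ obeys $\Delta_2$. Concretely, I would produce a (sub)sequence $(w_n)_n\subset V\otimes\m{C10T}$ with $w_n\to w$ in $\m{W110TL2}$ and in $\m{L20TH01}$, with $\nabla w_n\to\nabla w$ in $\m{L1QRd}$ (hence in measure and a.e.\ along a further subsequence), and with $\rho_{\pot,Q}\big(\lambda(\nabla w_n-\nabla w)\big)\to 0$ for some fixed $\lambda\in(0,1]$. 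The quantitative claim of the lemma then follows by taking $n=n(\eps)$ large (for the last inequality, also depending on the given $\eta$, which is all that is needed in the sequel, where $\eta$ is a fixed element of $\m{orspapot*QRd}$). The first two convergences are the usual mollifier and Galerkin estimates; the whole point is the third.

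\emph{Time.} Extend $w$ to a larger time interval by even reflection at $0$ and $T$ (this preserves $\m{W110TL2}$-regularity, keeps the slices in $\m{L2Om}\cap\m{H01Om}$, and does not increase $\rho_{\pot,Q}$ of the spatial gradient) and mollify in $t$: $w_\delta:=w\ast\rho_\delta$. Then $w_\delta\in\mathscr{C}^\infty([0,T];\m{H01Om})$, $w_\delta\to w$ in $\m{W110TL2}$ and in $\m{L20TH01}$, and $\nabla w_\delta=(\nabla w)\ast\rho_\delta\to\nabla w$ in $\m{L1QRd}$. Since $\rho_\delta\ge 0$ and $\int\rho_\delta=1$, Jensen's inequality together with Fubini yields $\rho_{\pot,Q}(\nabla w_\delta)\le\rho_{\pot,Q}(\nabla w)<\infty$ uniformly in $\delta$, and the family $\{\pot(\nabla w_\delta)\}_\delta$ is equi-integrable (being dominated, via Jensen, by convolutions of the fixed $\m{L1Q}$-function $\pot(\nabla w)$). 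Combining this with the a.e.\ convergence and the convexity bound $\pot(\lambda(\nabla w_\delta-\nabla w))\le\frac{1}{2}\pot(2\lambda\nabla w_\delta)+\frac{1}{2}\pot(2\lambda\nabla w)$ (valid for $\lambda\le\frac{1}{2}$, where also $\rho_{\pot,Q}(2\lambda\nabla w)<\infty$), a Vitali argument---or the sequential lower semicontinuity of the modular from \cite[Lemma 2.2]{emmrich2013convergence}---gives $\rho_{\pot,Q}(\lambda(\nabla w_\delta-\nabla w))\to 0$ for $\lambda=\frac{1}{2}$.

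\emph{Space, finite rank, diagonalization.} For each fixed $\delta$, replace $w_\delta$ by a function $w_{\delta,\delta'}$ that is still $\mathscr{C}^1$ in $t$, now with values in $\m{CcinftyOm}$ (so with gradient in $\m{LinftyOmRd}\subseteq\m{orEpotOmRd}$, hence with slices in $V$), following the construction in \cite[Lemma 4.3]{emmrich2014equations} (cf.\ the proof of Lemma \ref{Ccinfty dense in V}): localize by a partition of unity subordinate to a finite cover of $\overline{\Omega}$, flatten $\partial\Omega$ by bi-Lipschitz charts, translate the boundary pieces strictly into $\Omega$, and mollify in $x$. Being built only from convolutions and Lipschitz changes of variables, this again admits the Jensen and equi-integrability estimates above, so $\nabla w_{\delta,\delta'}\to\nabla w_\delta$ in $\m{L1QRd}$ and modularly as $\delta'\to 0$ (the lower-order contributions of differentiated cut-offs enter only in $\m{L2Om}$, as in the classical density proof), while $w_{\delta,\delta'}\to w_\delta$ in $\m{W110TL2}$ and $\m{L20TH01}$. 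The function $g:=w_{\delta,\delta'}$ lies in $\mathscr{C}^1([0,T];\m{Hr})$ with relatively compact range, so applying the $\m{L2}$-orthogonal (equivalently $\m{Hr}$-orthogonal) projection $P_m$ pointwise in $t$ gives $P_m g=\sum_{k=1}^m c_k(\cdot)\varphi_k\in V_m\otimes\m{C10T}\subset V\otimes\m{C10T}$ with $c_k\in\m{C10T}$, and $P_m g\to g$ in $\mathscr{C}^1([0,T];V)$ as $m\to\infty$---in particular in $\m{W110TL2}$, in $\m{L20TH01}$, and in the Luxemburg norm of the gradient, which is stronger than the modular convergence needed. A diagonal choice $m=m(\delta,\delta')$, $\delta'=\delta'(\delta)$, $\delta\to 0$ then yields the desired $(w_n)\subset V\otimes\m{C10T}$ (the three modular convergences combine, by convexity of $\pot$, into one with a smaller constant $\lambda$).

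\emph{From modular convergence to the pairing estimate, and the main obstacle.} Fix $\eta\in\m{orspapot*QRd}$, $\eta\neq 0$, and set $\lambda_0:=\|\eta\|_{\pot^*,Q}$, so that $\rho_{\pot^*,Q}(\eta/\lambda_0)\le 1$, i.e.\ $\eta/\lambda_0\in\m{orclapot*QRd}$ and $\pot^*(\eta/\lambda_0)\in\m{L1Q}$; note that \emph{every} element of $\m{orspapot*QRd}$ scales into the Orlicz class this way, which is why the argument reaches all of $\m{orspapot*QRd}$ and not merely $\m{orEpot*QRd}$. Split $\int_Q\eta\cdot(\nabla w_n-\nabla w)$ over $\{|\nabla w_n-\nabla w|\le 1\}$ and its complement. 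On the first set the integrand is bounded by $|\eta|\in\m{L1Q}$ (recall $\m{orspapot*QRd}\subseteq\m{L1QRd}$) and tends to $0$ a.e., so its integral tends to $0$ by dominated convergence. On the complement, Fenchel--Young applied to $\lambda(\nabla w_n-\nabla w)$ and $\eta/\lambda_0$ gives $|\eta\cdot(\nabla w_n-\nabla w)|\le\frac{\lambda_0}{\lambda}\big(\pot(\lambda(\nabla w_n-\nabla w))+\pot^*(\eta/\lambda_0)\big)$; integrating, the first term contributes at most $\frac{\lambda_0}{\lambda}\rho_{\pot,Q}(\lambda(\nabla w_n-\nabla w))\to 0$, while $\int_{\{|\nabla w_n-\nabla w|>1\}}\pot^*(\eta/\lambda_0)\to 0$ because $\pot^*(\eta/\lambda_0)\in\m{L1Q}$ and $|\{|\nabla w_n-\nabla w|>1\}|\to 0$. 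Hence $\int_Q\eta\cdot\nabla w_n\to\int_Q\eta\cdot\nabla w$, as claimed. The genuinely delicate point is the spatial step: carrying out a regularization that simultaneously restores the homogeneous Dirichlet condition, lands the slices in $V$ (gradients in $\m{orEpotOmRd}$, not just in the Orlicz class $\m{orclapotOmRd}$), and keeps $\rho_{\pot,Q}$ of the gradients under control---which is exactly where the absence of a $\Delta_2$-condition forces one to argue modularly rather than in norm; Jensen's inequality for convolutions is the one tool that makes all of this compatible, everything else being bookkeeping and diagonal extraction.
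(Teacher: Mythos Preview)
The paper does not actually prove this lemma; it simply states it and cites \cite[Lemma 4.3]{emmrich2014equations} as ``almost identical''. Your overall plan---time mollification, spatial regularization by partition of unity/translation/mollification, a finite-rank projection to reach the tensor product $V\otimes\m{C10T}$, and the Fenchel--Young splitting to deduce the pairing estimate from modular convergence---is the standard strategy and presumably matches the cited reference. Your observation that the clause ``for all $\eta\in\m{orspapot*QRd}$'' cannot be taken literally uniformly in $\eta$ (else $\nabla w_\eps=\nabla w$), and that only a fixed $\eta$ is actually used downstream, is correct; the paper's own Lemma~\ref{Ccinfty dense in V} has the same loose phrasing.

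There is, however, one genuine gap in your spatial step. When you localize by cut-offs $(\chi_j)$, the gradient of each piece acquires a lower-order term $(\nabla\chi_j)\,w_\delta$. You dismiss these as entering ``only in $\m{L2Om}$'', but that is not enough here: your Jensen/equi-integrability control of $\pot(\lambda\nabla w_{\delta,\delta'})$ would need $\int_Q\pot\big(c(\nabla\chi_j)w_\delta\big)<\infty$, and your Fenchel--Young splitting for the pairing with $\eta\in\m{orspapot*QRd}$ likewise reduces to a modular bound on these terms. Since $\pot$ may grow faster than any power, $w_\delta\in L^2(Q)$ (or even $w_\delta\in L^\infty(0,T;L^{2^*}(\Omega))$ via Sobolev) gives no such control, and $\eta$ need not lie in $L^2(Q;\R^d)$, so $L^2$-convergence of the cut-off contribution to zero does not yield $\int_Q\eta\cdot(\text{cut-off})\to 0$ either. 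The remedy that the paper itself uses in Lemma~\ref{Ccinfty dense in V} is a preliminary truncation: replace $w$ by $T_n(w)$ first (this preserves membership in $\m{W110TL2}\cap\m{L20TH01}$, and modular convergence of the gradient follows since $\nabla T_n(w)-\nabla w$ is $-\nabla w$ restricted to $\{|w|>n\}$, whose measure tends to zero), and only then time-mollify. After that the function is bounded by $n$, the cut-off terms $(\nabla\chi_j)T_n(w)_\delta$ lie in $\m{LinftyQRd}\subset\m{orEpotQRd}$, and all of your Jensen and Vitali estimates go through. With this insertion (truncate, then time-mollify, then space-regularize, then project), your argument is complete.
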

For any $\eps>0$, and any $w\in\mathscr{W}$, there is (recalling also the continuous embedding of the space $\m{W110TL2}$ into $\m{C0TL2}$) an element $w_{\eps}\in \m{C10T}\otimes V$ such that
\begin{multline*}
	\left| \int_{\Omega}\zeta(w_{\eps}(\cdot,T)-w(\cdot,T))\dx \right| + \left| \int_{\Omega} v_0(w_{\eps}(\cdot,0)-w(\cdot,0))\dx \right|
	+ \left| \int_Q \partial_tu \partial_t(w_{\eps}-w)\dx\dt \right| \\
	+ \left| \int_Q \nabla \partial_t u\cdot \nabla(w_{\eps}-w) \dx\dt\right|
	+ \left| \int_Q \alpha\cdot \nabla(w_{\eps}-w)\dx\dt \right| + \left| \int_Q f(w_{\eps}-w)\dx\dt \right| <\eps.
\end{multline*}
Therefore,
\begin{equation}
 	\int_{\Omega} \left(\zeta w(\cdot,T)- v_0 w(\cdot,0)\right) \dx 
 	+\int_{Q} \left( -\partial_{t}u \partial_t w + \nabla \partial_{t}u \cdot \nabla w+ \alpha \cdot \nabla w \right) \dx\dt 
 	= \int_{Q} f w\dx\dt,
 	\label{GGW}
 \end{equation}
 for all $w\in\mathscr{W}$.

\paragraph{Identification of initial and final values:}
Since $\widehat{u}_l\to u$ in $\m{C0TL2}$ as $l\to\infty$ we have in particular $\widehat{u}_l(0)\to u(0)$ in $\m{L2Om}$. On the other hand, $\widehat{u}_l(0)=u_l^0\to u_0$ in $V$ as $l\to\infty$ thus $u(0)=u_0$ in $\m{L2Om}$. Similarly we can identify $u(T)$ with $\xi$.

In order to identify $\partial_{t}u(0)$ and $ \partial_{t}u(T)$ with $v_0$ and $\zeta$ respectively we test the limit equation \eqref{GG2} with $\Psi(t)=(T-t)/T$. Thus, we find for all $w\in V$
\begin{equation*}
\begin{split}
 	\frac{\operatorname{d}}{\operatorname{dt}} \left( \Psi(t)\int_{\Omega} \partial_{t}u(\cdot, t) w \dx\right)
 	&= \Psi'(t)\int_{\Omega} \partial_{t}u(\cdot,t) w\dx + \Psi(t) \frac{\operatorname{d}}{\operatorname{dt}} \int_{\Omega} \partial_{t}u(\cdot,t) w\dx\\
 	&= \Psi'(t)\int_{\Omega} \partial_{t}u(\cdot,t) w\dx \\
 	&\hspace{8em}+ \Psi(t)  \int_{\Omega} \left( f(\cdot,t) w -\nabla \partial_{t}u(\cdot,t) \cdot\nabla w -\alpha(\cdot,t) \cdot\nabla w \right) \dx.
\end{split}
\end{equation*} 
Integration over $(0,T)$ and employing \eqref{GG1} yields
\begin{equation*}
	-\int_{\Omega} \partial_{t}u(\cdot, 0) w\dx =  \int_{\Omega} \left( \zeta w\Psi(T)-v_0 w \Psi(0) \right)\dx
  = -\int_{\Omega}v_0 w\dx.
\end{equation*}
Because of the density of $V$ in $\m{L2Om}$, we find $\partial_{t}u(0)=v_0$ in $\m{L2Om}$. A similar calculation with $\Psi(t)= t/T$ shows $\partial_{t}u(T)=\zeta$. 
\paragraph{Identification of the nonlinear term:}
Let us start by taking $\psi=u_l(\cdot,t)-u_l^0\in V_{m_l}$ as the test function in \eqref{numsch2}. After integrating over $(0,T)$, we have
\begin{equation}
	\int_Q \underset{\text{\textbf{A}}}{\nonl(\nabla u_l) \cdot \nabla u_l}\dx\dt = \int_Q \left( \underset{\text{\textbf{B}}}{f_l (u_l-u_l^0)} -\underset{\text{\textbf{C}}}{\partial_{t}\widehat{v}_l (u_l-u_l^0)} \right.
	 \left. - \underset{\text{\textbf{D}}}{\nabla v_l \cdot (\nabla u_l -\nabla u_l^0)} +\underset{\text{\textbf{E}}}{\nonl(\nabla u_l)\cdot \nabla u_l^0} \right) \dx\dt.
  \label{identification equation}
\end{equation}
We examine each term \textbf{A}-\textbf{E} seperately:
\begin{enumerate}
	\item[\textbf{A}] 
	For arbitrary $\eta\in\m{LinftyQRd}$ using the monotonicity of $\nonl$ we find
	\begin{align*}
		\int_Q \nonl(\nabla u_l)\cdot \nabla u_l \dx\dt &\geq \int_Q \nonl(\nabla u_l)\cdot \nabla u_l \dx\dt 
		- \int_Q (\nonl(\nabla u_l)-\nonl(\eta)) \cdot (\nabla u_l - \eta)\dx\dt\\
		&=\int_Q \nonl(\nabla u_l) \cdot\eta \dx\dt + \int_Q \nonl(\eta) \cdot (\nabla u_l -\eta)\dx\dt.
	\end{align*}
	Note that $\nonl(\eta)$ is in $\m{orEpot*QRd}$ since $\eta\in\m{LinftyQRd}$ and $\nonl$ is continuous. With the convergence \ref{nonl weak* convergence} and \ref{nabla ul wk* convergence} seen in lemma \ref{conv subseq II} and \ref{conv subseq I} respectively we find, as $l\to\infty$,
	\begin{equation*}
		\int_Q \alpha \cdot\eta\dx\dt +\int_Q\nonl(\eta)\cdot(\nabla u-\eta) \dx\dt\leq \liminf_{l\to\infty}\int_Q\nonl(\nabla u_l)\cdot \nabla u_l\dx\dt.
	\end{equation*}
	\item[\textbf{B}] Since $u_l$ converges weakly* in $\m{Linfty0TH01}$ and because of assumption \eqref{initial data}, $u_l-u_l^0$ converges weakly* in $\m{Linfty0TL2}$. Since $f_l\to f$ in $\m{L10TL2}$ we thus obtain
	\begin{equation*}
		\int_Q f_l(u_l-u_l^0) \dx\dt \to \int_Q f(u-u_0) \dx\dt,\quad\text{as } l\to\infty.
	\end{equation*}
	\item[\textbf{C}] Summation by parts yields
	\begin{align*}
		\int_{Q} \partial_t \widehat{v}_l(u_l-u_l^0)\dx\dt &= \sum_{n=1}^{N_l} \int_{\Omega} (v_l^n-v_l^{n-1})(u_l^n-u_l^0)\dx\\
		&=\int_{\Omega}v_l^{N_l}(u_l^{N_l}-u_l^0)\dx - \tau_l\sum_{n=1}^{N_l} \int_{\Omega}v_l^nv_l^{n-1}.
	\end{align*}
	By straightforward computation we further find
	\begin{equation*}
		\tau_l\sum_{n=1}^{N_l}\int_{\Omega} v_l^n v_l^{n-1} =\int_Q(\widehat{v}_l-v_l)v_l\dx\dt + \int_Q \widehat{v}_lv_l\dx\dt.
	\end{equation*}
 	The strong convergence of $\widehat{v}_l-v_l$ in $\m{L2Q}$ towards zero, the weak convergence of $v_l$, and the strong convergence of $\widehat{v}_l$ towards $\partial_t u$ in the space $\m{L20TH01}$ and $\m{L2Q}$ respectively provides convergence of the right-hand side. Together with the weak convergence of $\widehat{v}_l(T)$ towards $\partial_t u(T)$ in $\m{L2Om}$ and the strong convergence of $\widehat{u}_l(T)$ and $u_l^0$ towards $u(T)$ (in view of lemma \ref{conv subseq I} \ref{uhatl strong convergence}) and $u_0$ in $\m{L2Om}$ and $\m{H01Om}$ respectively we find, as $l\to\infty$,
	\begin{equation*}
		\int_{Q} \partial_t \widehat{v}_l(u_l-u_l^0)\dx\dt \to \int_{\Omega}\partial_tu(T)(u(T)-u_0)\dx - \int_Q|\partial_tu|^2\dx\dt.
	\end{equation*}

	\item[\textbf{D}]
	Since
	\begin{align*}
		\int_Q \nabla v_l\cdot \nabla u_l\dx\dt = \sum_{n=1}^{N_l} \int_{\Omega}(\nabla u_l^n-\nabla u_l^{n-1})\cdot \nabla u_l^n\dx
		\geq \frac{1}{2}\left(\norm{2Om}{\nabla u_l^{N_l}}^2-\norm{2Om}{\nabla u_l^0}^2\right)
	\end{align*}
	we find
	\begin{align*}
		-\int_Q \nabla v_l(\nabla u_l-\nabla u_l^0)\dx\dt 
		\leq -\frac{1}{2}\left(\norm{2Om}{\nabla u_l^{N_l}}^2-\norm{2Om}{\nabla u_l^0}^2\right) + \int_Q \nabla v_l\cdot\nabla u_l^0\dx\dt.
	\end{align*}
	Because $u_l^0$ converges strongly towards $u_0$ in $\m{H01Om}$ and thus strongly in $\m{L20TH01}$, together with the weak convergence of $v_l$ towards $\partial_t u$ in $\m{L20TH01}$, the weak convergence of $\widehat{u}_l(T)$ towards $u(T)$ in $\m{H01Om}$ and the weak sequential lower semicontinuity of the norm, we obtain
	\begin{align*}
		&\limsup_{l\to\infty} - \int_Q \nabla v_l(\nabla u_l-\nabla u_l^0)\dx\dt \\
		&\leq -\liminf_{l\to\infty} \frac{1}{2}\norm{2Om}{\nabla u_l^{N_l}}^2 + \lim_{l\to\infty}\frac{1}{2}\norm{2Om}{\nabla u_l^0}^2 +\lim_{l\to\infty} \int_Q \nabla v_l\cdot\nabla u_l^0\dx\dt\\
		&\leq -\frac{1}{2}\norm{2Om}{\nabla u(T)}^2 +\frac{1}{2}\norm{2Om}{\nabla u(0)}^2 +\int_Q \nabla \partial_t u\cdot\nabla u_0\dx\dt\\
		&=-\frac{1}{2}\int_0^T \frac{\operatorname{d}}{\dt}\norm{2Om}{\nabla u(t)}^2\dt +\int_Q \nabla \partial_t u\cdot\nabla u_0\dx\dt\\
		&=-\int_Q \nabla \partial_t u\cdot(\nabla u -\nabla u_0)\dx\dt.
	\end{align*}

	\item[\textbf{E}] 
	Because of assumption \eqref{initial data} $\nabla u_l^0$ converges strongly towards $\nabla u_0$ in $\m{orEpotOmRd}$ and since
	\begin{equation*}
		\norm{potQ}{\nabla u_l^0-\nabla u_0} \leq \max(1,T)\norm{potOm}{\nabla u_l^0-\nabla u_0}
	\end{equation*}
	also in $\m{orEpotQRd}$.
	Finally the weak* convergence of the nonlinear term yields convergence in the last term:
	\begin{equation*}
		\int_Q \nonl(\nabla u_l)\cdot\nabla u_l^0\dx\dt \to \int_Q \alpha \cdot \nabla u_0\dx\dt,\quad \text{as }l\to\infty.
	\end{equation*}
\end{enumerate}
Combining these results, we can pass to the limit in \eqref{identification equation} and obtain
\begin{multline}
	\int_Q \alpha \cdot\eta\dx\dt +\int_Q \nonl(\eta) \cdot (\nabla u-\eta)\dx\dt 
	\leq \int_Q f (u-u_0)\dx\dt -\int_{\Omega}\partial_{t}u(T) (u(T)-u_0)\dx \\
  + \int_Q \left(|\partial_{t}u|^2 - \nabla \partial_{t}u \cdot (\nabla u-\nabla u_0) +  \alpha \cdot\nabla u_0 \right) \dx\dt.\label{Limit inequality}
\end{multline}
We have seen in lemma \ref{conv subseq I} and \ref{conv subseq II} that $u\in\m{Linfty0TH01}$ with $\partial_t u\in\m{L2Q}$ so that $u\in\m{W110TL2}\cap\m{L20TH01}$ and additionally $\nabla u\in\m{orclapotQRd}$. This means $u\in\mathscr{W}$ is an admissable test function in \eqref{GGW}. Furthermore $u_0\in V\subset \m{H01Om}$ and $\nabla u_0\in\m{orEpotOmRd}$ and thus, as seen before, also $\nabla u_0\in\m{orEpotQRd}$. Therefore also $u_0\in\mathscr{W}$ is admissable and going back to \eqref{GGW} we find
\begin{multline*}
 	\int_{\Omega} \partial_{t}u(T) (u(T)-u_0)\dx 
 	+\int_{Q} \left(-|\partial_{t}u|^2  + \nabla \partial_{t}u \cdot (\nabla u-\nabla u_0)+ \alpha \cdot (\nabla u-\nabla u_0) \right) \dx\dt\\
  = \int_{Q} f(u-u_0)\dx\dt.
 \end{multline*}
Using this together with \eqref{Limit inequality} yields
\begin{equation*}
	\int_Q \alpha \cdot \eta\dx\dt +\int_Q \nonl(\eta) \cdot (\nabla u-\eta)\dx\dt \leq \int_Q \alpha \cdot\nabla u\dx\dt
\end{equation*}
and thus
\begin{equation}
	\int_Q (\alpha - \nonl(\eta)) \cdot (\nabla u -\eta)\dx\dt \geq 0. \label{nonnegativity for minty}
\end{equation}
The remaining step is to show that $\alpha=\nonl(\nabla u)$. To this end, we use a variant of Minty's trick adapted to the case of nonreflexive Orlicz spaces (see also \cite{emmrich2014equations,emmrich2013convergence,gwiazda2010monotonicity,mustonen1999monotone}). For $k\geq 0$ we define $Q_k:=\{(x,t)\in Q| \,|\nabla u(x,t)|>k\}$ and set
\begin{equation*}
	\eta =\begin{cases}
		0 & \text{in }Q_j,\\
		\nabla u &\text{in }Q_k\setminus Q_j,\\
		\nabla u-\lambda\overline{\eta} &\text{in }Q\setminus Q_k,
	\end{cases}
\end{equation*}
where $\lambda\in(0,1)$ and $\overline{\eta}\in\m{LinftyQRd}$ and $j>k\geq 0$ are arbitrary. This choice ensures $\eta\in\m{LinftyQRd}$ and together with \eqref{nonnegativity for minty} we find
\begin{equation*}
	\int_{Q_j}\underset{\text{\textbf{A}}}{(\alpha -\nonl(0))\cdot \nabla u}\dx\dt + \lambda \int_{Q\setminus Q_k} \underset{\text{\textbf{B}}}{(\alpha -\nonl(\nabla u-\lambda \overline{\eta}))\cdot \overline{\eta}}\dx\dt \geq 0.
\end{equation*}
\begin{enumerate}
\item[\textbf{A}] Since $\alpha,\nonl(0)\in\m{orclapot*QRd}$ and $\nabla u\in\m{orclapotQRd}$ H\"{o}lder's inequality shows $(\alpha -\nonl(0))\cdot\nabla u\in\m{L1Q}$. Because $\nabla u\in\m{L1QRd}$ the measure of $Q_j$ goes to zero as $j\to\infty$ and thus
\begin{equation*}
	\int_{Q_j}(\alpha -\nonl(0)) \cdot \nabla u \dx\dt \to 0,\quad \text{as }j\to\infty.
\end{equation*}
\item[\textbf{B}] The monotonicity of $\nonl$ gives
	\begin{equation*}
		\nonl(\nabla u-\overline{\eta}) \cdot \overline{\eta} \leq \nonl(\nabla u-\lambda\overline{\eta}) \cdot \overline{\eta} \leq \nonl(\nabla u) \cdot\overline{\eta}
	\end{equation*}
	so that
	\begin{equation*}
		|\nonl(\nabla u-\lambda\overline{\eta}) \cdot \overline{\eta}| \leq \max \{|\nonl(\nabla u-\overline{\eta})\cdot\overline{\eta}|, |\nonl(\nabla u)\cdot\overline{\eta}|\} \in \m{L1QQk},
	\end{equation*}
	because $\nabla u$ is bounded on $Q\setminus Q_k$. Since $\nonl$ is continuous, we thus find with Lebesgue's theorem on dominated convergence that
	\begin{equation*}
	\int_{Q\setminus Q_k} (\alpha -\nonl(\nabla u-\lambda\overline{\eta})) \cdot \overline{\eta} \dx\dt \to \int_{Q\setminus Q_k} (\alpha -\nonl(\nabla u))\cdot\overline{\eta}\dx\dt, \quad\text{as }\lambda\to 0.
	\end{equation*}
\end{enumerate}
Thus, we obtain
\begin{equation*}
	\int_{Q\setminus Q_k} (\alpha -\nonl(\nabla u))\cdot\overline{\eta}\dx\dt \geq 0.
\end{equation*}
With the choice
\begin{equation*}
	\overline{\eta} =\begin{cases}
		-\frac{\alpha-\nonl(\nabla u)}{|\alpha -\nonl(\nabla u)|}, &\text{if }\alpha\neq \nonl(\nabla u),\\
		0,& \text{otherwise},
	\end{cases}
\end{equation*}
we obtain
\begin{equation*}
	\int_{Q\setminus Q_k}|\alpha-\nonl(\nabla u)| \dx\dt=0.
\end{equation*}
This shows that $\alpha=\nonl(\nabla u)$ almost everywhere in $Q\setminus Q_k$. Finally, because $k$ is arbitrary, the equality holds almost everywhere in $Q$.
\end{proof}

\subsection{Uniqueness}
If the solution is sufficiently regular, we also have uniqueness. Let $u$ and $v$ be two solutions to the problem with the same data $(u_0,f)$. From the proof above, we already know that
\begin{multline}
	\int_{\Omega} (\partial_t u(\cdot,T)-\partial_t v(\cdot,T))w(\cdot,T)\dx \\
	+ \int_Q (-(\partial_t u -\partial _t v)\partial_t w + (\nabla \partial_t u - \nabla\partial_t v)\cdot\nabla w + (\nonl(\nabla u)-\nonl(\nabla v))\cdot\nabla w )\dx\dt
	= 0
\label{uniqueness test eq}
\end{multline}
for all $w\in\mathscr{W}$. If we assume that $u,v\in\mathscr{W}\cap\m{W120TH1}$ and additionally $\nonl(\nabla u),\nonl(\nabla v)\in\m{orclapot*QRd}$, then by testing \eqref{uniqueness test eq} with $w=(u-v)\Psi_{\eps,\overline{t}}$, where
\begin{equation*}
	\Psi_{\eps,\overline{t}}(t)=\begin{cases}
		1 &\text{if }0\leq t\leq \overline{t}-\eps,\\
		\frac{\overline{t}-t}{\eps} &\text{if }\overline{t}-\eps <t \leq \overline{t},\\
		0 &\text{otherwise},
	\end{cases}
\end{equation*}
and using the monotonicity of $\nonl$, we find
\begin{multline*}
	0 \geq \int_0^{\overline{t}-\eps}\int_{\Omega}((\partial_t u -\partial_t v)(u-v) + (\nabla \partial_t u-\nabla\partial_t v)\cdot(\nabla u-\nabla v))\dx\dt \\
	+ \int_{\overline{t}-\eps}^{\overline{t}} \frac{\overline{t}-t}{\eps}\int_{\Omega}((\partial_t u -\partial_t v)(u-v) + (\nabla \partial_t u-\nabla\partial_t v)\cdot(\nabla u-\nabla v))\dx\dt.
\end{multline*}
Employing Lebesgue's theorem on dominated convergence, as $\eps\to 0$ the right-hand side converges and we end up with
\begin{align*}
	0 &\geq \int_0^{\overline{t}}\int_{\Omega}((\partial_t u -\partial_t v)(u-v) + (\nabla \partial_t u-\nabla\partial_t v)\cdot(\nabla u-\nabla v))\dx\dt\\
	&=\int_0^{\overline{t}}\frac{1}{2}\frac{\operatorname{d}}{\dt}\left( \norm{2Om}{u-v}^2 + \norm{2Om}{\nabla u(\cdot,t)-\nabla v(\cdot,t)}^2 \right)\dt
\end{align*}
and thus
\begin{equation*}
	\norm*{\m{H1Om}}{u(\cdot,\overline{t})-v(\cdot,\overline{t})} =0
\end{equation*}
for all $\overline{t}\in (0,T]$, which shows uniqueness.

\subsection{Error Estimate}

Although results on additional regularity of a weak solution to the problem \eqref{prob1} are not at hand, one may ask for estimates of the discretization error providing convergence rates in case the exact solution is smooth. In this section, we make a first step towards error estimates, restricting ourselfs to the temporal semidiscretization.
\begin{thm}\label{error estimate 1}
	Let $u_0,u^0\in V, v_0,v^0\in\m{L2Om}$ and $f\in\m{L10TL2}$. Let further $u$ be a solution of \eqref{prob1} with $\partial_t u,\partial_{tt}u,\partial_{ttt}u\in\m{L10TL2}$ as well as $u(\cdot,t)\in V$, $\partial_t u(\cdot,t)\in \m{L2Om}$ and $\nonl(\nabla u(\cdot,t))\in\m{orclapot*OmRd}$ for all $t\in [0,T]$. Let $f^n:=\frac{1}{\tau}\int_{t_{n-1}}^{t_n} f(\cdot,t)\dt$ and $u^n\in V$ with $\nonl(\nabla u^n)\in\m{orclapot*OmRd}$ be an approximation of $u(\cdot,t_n)$ such that for $n=1,2,\ldots,N$,
	\begin{align*}
		\frac{u^n-u^{n-1}}{\tau}&=v^n\\
		\intertext{and}
		\int_{\Omega} \left(\frac{v^n-v^{n-1}}{\tau}v + \nabla v^n\cdot \nabla v +\nonl(\nabla u^n)\cdot\nabla v\right)\dx &= \int_{\Omega}f^nv \dx
	\end{align*}
	for all $v\in V$.Then there is a constant $C>0$ such that, for $n=1,2,\ldots,N$,
	\begin{multline*}
		\norm{2Om}{u(\cdot,t_n)-u^n} \leq C \bigg( \norm{2Om}{u_0-u^0}+\norm{2Om}{v_0-v^0} +\tau\norm*{\m{L10TL2}}{\partial_{tt}u} \\
		+\tau\norm*{\m{L10TL2}}{\partial_{ttt}u} +  \norm*{\m{L10TL2}}{\overline{f}-f} \bigg)
	\end{multline*}
	where $\overline{f}$ denotes the piecewise constant in time interpolation of $f$ with respect to $(t_n)_{n=1}^N$.
\end{thm}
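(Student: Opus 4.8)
The plan is to run an energy estimate for the error of the scheme; the one non‑routine ingredient is that the nonlinear term is handled through monotonicity of $\nonl$ --- exactly as in the uniqueness argument above --- rather than through any Lipschitz or growth bound. Put $U^n:=u(\cdot,t_n)$, $V^n:=\partial_t u(\cdot,t_n)$ and introduce the errors $e^n:=U^n-u^n$, $\varepsilon^n:=V^n-v^n$, with $e^0=u_0-u^0$, $\varepsilon^0=v_0-v^0$ and, consistently with \eqref{numscheme}, $U^{-1}:=u_0-\tau v_0$, $u^{-1}:=u^0-\tau v^0$. Since $u$ is assumed regular enough that the weak form of \eqref{prob1} may be taken at $t=t_n$ against test functions from $V$, I would do so, substitute the backward difference quotients appearing in \eqref{numscheme} for $\partial_{tt}u(\cdot,t_n)$ and $\partial_t u(\cdot,t_n)$, and subtract \eqref{numscheme}. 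With the truncation residuals
\[
 \rho^n:=V^n-\frac1\tau\bigl(U^n-U^{n-1}\bigr),\qquad
 \mu^n:=\partial_{tt}u(\cdot,t_n)-\frac1\tau\bigl(V^n-V^{n-1}\bigr),
\]
this yields, for all $w\in V$ and $n=1,\dots,N$,
\[
 \int_\Omega\Bigl(\frac{\varepsilon^n-\varepsilon^{n-1}}{\tau}\,w+\nabla\varepsilon^n\cdot\nabla w+\bigl(\nonl(\nabla U^n)-\nonl(\nabla u^n)\bigr)\cdot\nabla w\Bigr)\dx
 =\int_\Omega\bigl(f(\cdot,t_n)-f^n-\mu^n\bigr)\,w\,\dx,
\]
coupled with $\tfrac1\tau(e^n-e^{n-1})=\varepsilon^n-\rho^n$, which follows from $\tfrac1\tau(u^n-u^{n-1})=v^n$. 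Taylor's theorem with integral remainder gives $\norm{2Om}{\rho^n}\le\int_{t_{n-1}}^{t_n}\norm{2Om}{\partial_{tt}u}\,\dt$ and $\norm{2Om}{\mu^n}\le\int_{t_{n-1}}^{t_n}\norm{2Om}{\partial_{ttt}u}\,\dt$, and, since $f^n$ is the mean of $f$ over $(t_{n-1},t_n)$ while $\overline f$ is the corresponding nodal piecewise‑constant interpolant, $\tau\norm{2Om}{f(\cdot,t_n)-f^n}\le\int_{t_{n-1}}^{t_n}\norm{2Om}{f-\overline f}\,\dt$; summing over $n$ then makes these three residuals contribute precisely $\tau\norm*{\m{L10TL2}}{\partial_{tt}u}$, $\tau\norm*{\m{L10TL2}}{\partial_{ttt}u}$ and $\norm*{\m{L10TL2}}{f-\overline f}$.

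The decisive choice is to test the error identity with $w=\tau e^n$. \textbf{This is where monotonicity is indispensable:} the nonlinear contribution becomes $\tau\int_\Omega(\nonl(\nabla U^n)-\nonl(\nabla u^n))\cdot(\nabla U^n-\nabla u^n)\,\dx\ge0$ and may simply be discarded, so that nothing about $\nonl$ beyond monotonicity is used --- which is also why the comparison has to be arranged against the \emph{nodal} stress $\nonl(\nabla U^n)$ and not against a time‑averaged stress. For the remaining terms I would insert $\tau\varepsilon^n=(e^n-e^{n-1})+\tau\rho^n$ and use the elementary identity $(A-B)\cdot A=\tfrac12\bigl(|A|^2-|B|^2+|A-B|^2\bigr)$ (valid in $\R$ and in $\R^d$) to turn the inertial term $\int_\Omega(\varepsilon^n-\varepsilon^{n-1})e^n\,\dx$ and the viscous term $\tau\int_\Omega\nabla\varepsilon^n\cdot\nabla e^n\,\dx$ into telescoping discrete kinetic and elastic energies plus nonnegative squares; the leftover contributions of $\rho^n$, $\mu^n$ and $f(\cdot,t_n)-f^n$ are bounded by the Cauchy--Schwarz, Young and Poincar\'e--Friedrichs inequalities, a small multiple of the energy being absorbed to the left‑hand side.

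Summing from $n=1$ to $k$ and inserting the initial errors leads to an inequality of the shape $(\text{discrete energy at level }k)\le C\,(\text{initial‑data errors})+C\,(\text{truncation terms})+C\tau\sum_{n=1}^{k}(\text{discrete energy at level }n)$, whereupon a discrete Gronwall lemma yields a bound on the discrete energy that is uniform in $k$ and in particular controls $\norm{2Om}{e^k}$, which is the assertion. \textbf{The main obstacle} is to organize this closing argument so that only the quantities listed in the statement survive: the velocity error $\varepsilon^n$ and the position error $e^n$ must be kept coupled through $\rho^n$ (so that one carries a single mixed inertial--elastic energy rather than two independent ones), and one must check that $\rho^n$, $\mu^n$ and the forcing defect enter solely through the $\m{L10TL2}$‑norms of $\partial_{tt}u$, $\partial_{ttt}u$ and $f-\overline f$. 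A subsidiary point is that $w=\tau e^n$ is essentially forced: testing with the velocity error $\tau\varepsilon^n$ would leave the unsigned quantity $\tau\int_\Omega(\nonl(\nabla U^n)-\nonl(\nabla u^n))\cdot\nabla\varepsilon^n\,\dx$, which cannot be estimated in the non‑reflexive Orlicz framework.
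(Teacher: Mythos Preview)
Your route departs from the paper's in the choice of test function. The paper tests the error identity with the \emph{velocity} error $e^n:=\partial_t u(\cdot,t_n)-v^n$, applies the identity $(A-B)A=\tfrac12(|A|^2-|B|^2+|A-B|^2)$ to the inertial term, drops the damping contribution $\|\nabla e^n\|_{2,\Omega}^2\ge 0$, and arrives at a recursion $\|e^n\|_{2,\Omega}^2-\|e^{n-1}\|_{2,\Omega}^2\le 2\tau b_n\|e^n\|_{2,\Omega}$ which is resolved by the elementary implication $a_n^2-a_{n-1}^2\le 2\tau a_n b_n\Rightarrow a_n\le a_0+2\tau\sum_j b_j$; no Gronwall argument is used. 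The position error is recovered only afterwards, from $u^n=u^0+\tau\sum_{j\le n}v^j$ together with $\sum_n\int_{t_{n-1}}^{t_n}\|\partial_t u(\cdot,t)-\partial_t u(\cdot,t_n)\|_{2,\Omega}\,\dt\le\tau\|\partial_{tt}u\|_{\m{L10TL2}}$. Your observation is entirely correct that under this test function the nonlinear contribution is $\int_\Omega(\nonl(\nabla U^n)-\nonl(\nabla u^n))\cdot\nabla(\partial_t u(\cdot,t_n)-v^n)\,\dx$, which is \emph{not} of monotonicity form; the paper nonetheless dismisses it in one line ``using the monotonicity of $\nonl$'', a step that is not justified as written.

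However, your alternative does not close under the stated hypotheses either. When you insert $\tau\varepsilon^n=(e^n-e^{n-1})+\tau\rho^n$ into the viscous term $\tau\int_\Omega\nabla\varepsilon^n\cdot\nabla e^n\,\dx$, the residual piece is $\tau\int_\Omega\nabla\rho^n\cdot\nabla e^n\,\dx$, and bounding it needs $\|\nabla\rho^n\|_{2,\Omega}$, i.e.\ $\nabla\partial_{tt}u\in\m{L10TL2}$, which is not among the assumptions (only $\partial_{tt}u\in\m{L10TL2}$ is). The inertial term $\int_\Omega(\varepsilon^n-\varepsilon^{n-1})\,e^n\,\dx$ is similarly not handled by the identity you invoke: $(A-B)\cdot A$ requires the outer factor to be $A$, whereas here the outer factor is the position error $e^n$ and the difference is of velocity errors. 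A summation by parts relates this to $\int_\Omega\varepsilon^k e^k\,\dx$ and cross terms $\tau\sum_n\int_\Omega\varepsilon^{n-1}\varepsilon^n\,\dx$ of indefinite sign, which do not produce the ``telescoping discrete kinetic and elastic energies'' you claim without further work. In short: testing with the position error repairs the monotonicity step but transfers the difficulty to the linear terms, where it reappears as a demand for gradient regularity of $\partial_{tt}u$ that the theorem does not provide.
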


\begin{proof}
	Let $e^n:=u_t(\cdot,t_n)-v^n$. Then integration by parts shows
	\begin{align*}
	  \int_{\Omega}\left( \frac{e^n-e^{n-1}}{\tau} v \right.+&\left. \nabla e^n\cdot \nabla v + (\nonl(\nabla u(\cdot,t_n))-\nonl(\nabla u^n))\cdot \nabla v\right)\dx\\
		&= \int_{\Omega} \left( \frac{\partial_t u(\cdot,t_n)-\partial_t u(\cdot,t_{n-1})}{\tau}v + \nabla \partial_t u(\cdot,t_n)\cdot\nabla v + \nonl(\nabla u(\cdot,t_n))\cdot\nabla v \right)\dx \\
		 &\hspace{.3\textwidth}-\int_{\Omega} \left(\frac{v^n-v^{n-1}}{\tau}v + \nabla v^n\cdot \nabla v +\nonl(\nabla u^n)\cdot v\right)\dx\\
		&=\int_{\Omega} \left( \frac{\partial_t u(\cdot,t_n)-\partial_t u(\cdot,t_{n-1})}{\tau}v - \partial_{tt} u(\cdot,t_n)v + (f(\cdot,t_n)-f^n)v \right)\dx\\
 		&= -\frac{1}{\tau}\int_{t_{n-1}}^{t_n}\int_{\Omega} (t-t_{n-1})\partial_{ttt} u(\cdot,t)v\dx\dt + \int_{\Omega} (f(\cdot,t_n)-f^n)v \dx
	\end{align*}
	for $n=1,2,\ldots,N$ and all $v\in V$. Testing this equation with $v=e^n$ and using the relation \eqref{(a-b)a rule} as well as the monotonicity of $\nonl$ and the Cauchy-Schwarz inequality, we find
	\begin{equation*}
		\norm{2Om}{e^n}^2 - \norm{2Om}{e^{n-1}}^2 
		\leq 2\tau \int_{t_{n-1}}^{t_n} \norm{2Om}{\partial_{ttt}u(\cdot,t)}\norm{2Om}{e^n}\dt + 2\tau\norm{2Om}{f(\cdot,t_n)-f^n}\norm{2Om}{e^n}.
	\end{equation*}
	One may check by induction that $a_n^2-a_{n-1}^2 \leq 2\tau a_nb_n$ ($n=1,2,\ldots$) for $(a_n),(b_n)\subset \R^+_0, \tau>0$, implies $a_n\leq a_0 +2\tau\sum_{j=1}^n b_j$ ($n=1,2,\ldots$). Thus, together with the estimate $\tau\sum_{n=1}^N \norm{2Om}{f(\cdot,t_n)-f^n}\leq \norm*{\m{L10TL2}}{\overline{f}-f}$ we find
	\begin{equation}
		\norm{2Om}{\partial_t u(\cdot,t_n)-v^n} \\
		\leq \norm{2Om}{v_0 - v^0} + 2\tau\norm*{\m{L10TL2}}{\partial_{ttt} u} +2\norm*{\m{L10TL2}}{\overline{f}-f}.\label{error estimate first derivative}
	\end{equation}
Having found an error estimate for the time derivative we now consider the approximation error of $u$ itself. Recalling that $u^n =u^0 +\tau\sum_{j=1}^n v^j$, we find
\begin{equation}
\begin{split}
	\norm{2Om}{u(\cdot,t_n)-u^n} &\leq \norm{2Om}{u_0-u^0} + \sum_{j=1}^n \int_{t_{j-1}}^{t_j} \norm{2Om}{\partial_t u(\cdot,t)-v^j}\dt \notag\\
	&\leq \norm{2Om}{u_0-u^0} + \sum_{j=1}^n \Bigg(\int_{t_{j-1}}^{t_j} \norm{2Om}{\partial_t u(\cdot,t)-\partial_t u(\cdot, t_j)}\dt \\
	&\hspace{12em} +\int_{t_{j-1}}^{t_j} \norm{2Om}{\partial_t u(\cdot,t_j) -v^j}\dt\Bigg)
\end{split}
\label{error estimate function}
\end{equation}
For the first integral term we can estimate as follows:
\begin{align*}
	\sum_{j=1}^n \int_{t_{j-1}}^{t_j} \norm{2Om}{\partial_t u(\cdot,t)-\partial_t u(\cdot,t_j)}\dt &\leq
	\sum_{j=1}^n \int_{t_{j-1}}^{t_j}\int_t^{t_j} \norm{2Om}{\partial_{tt} u(\cdot,s)}\operatorname{ds}\dt\\
	& \leq \tau \sum_{j=1}^n \int_{t_{j-1}}^{t_j} \norm{2Om}{\partial_{tt} u(\cdot,s)}\operatorname{ds} \\
	&\leq \tau \norm*{\m{L10TL2}}{\partial_{tt}u}. 
\end{align*}
Applying the estimate \eqref{error estimate first derivative} to the second integral term in \eqref{error estimate function} proves the statement.
\end{proof}

\appendix

\section{Separability of $V$}
As in section \ref{sec: full discretization} we let $V=\{w\in\m{H01Om}|\nabla w\in\m{orEpotOmRd}\}$ with $\norm*{V}{w}=\norm{2Om}{\nabla w}+\norm{potOm}{\nabla w}$.
	\begin{lem}\label{Ccinfty dense in V}
		The set $\m{CcinftyOm}$ is dense in $V$.
	\end{lem}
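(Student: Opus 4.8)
Since $\m{CcinftyOm}\subseteq V$ — the gradient of such a function is bounded, hence lies in $\m{orEpotOmRd}$, and trivially in $\m{L2OmRd}$ — the statement is meaningful, and the plan is to approximate a given $w\in V$ in the $V$-norm by first pushing its support into the interior of $\Omega$ by a near-identity diffeomorphism and then mollifying. Fix an open ball $B\supseteq\overline\Omega$. First I would extend $w$ by zero to $\widetilde w\in\mathrm{H}^1(B)$; then $\nabla\widetilde w$ is the zero extension of $\nabla w$, which belongs to $\mathrm{E}_{\phi}(B;\R^d)$ (the modular is unchanged under extension by zero, and since $\nabla w$ is an $\mathrm{L}_{\phi}$-limit of bounded functions supported in $\overline\Omega$, so is its extension) as well as to $\mathrm{L}^2(B;\R^d)$, and $\operatorname{supp}\widetilde w\subseteq\overline\Omega$.

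The geometric ingredient is the uniform interior cone condition of the bounded Lipschitz domain $\Omega$: covering $\partial\Omega$ by finitely many neighbourhoods in each of which $\Omega$ is, in suitable coordinates, the subgraph of a Lipschitz function, and gluing the corresponding ``inward'' directions by a smooth partition of unity subordinate to this cover, one obtains a field $b\in\mathscr{C}_c^{\infty}(B;\R^d)$ such that, for all sufficiently small $t>0$, the map $\Psi_t(x):=x+t\,b(x)$ is a $\mathscr{C}^{\infty}$-diffeomorphism of $B$ onto $B$ with $\Psi_t(\overline\Omega)\subseteq\Omega$, $\operatorname{dist}(\Psi_t(\overline\Omega),\partial\Omega)>0$, and $\Psi_t,\Psi_t^{-1}\to\mathrm{id}$ in $\mathscr{C}^{1}(B)$ with $\det D\Psi_t,\det D\Psi_t^{-1}\to 1$ uniformly as $t\to 0^+$. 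I stress that here the partition of unity acts on the vector field, not on $w$, so one never multiplies $w$ by a cut-off and hence never has to control a term of the form $w\,\nabla\chi$ in $\mathrm{E}_{\phi}$.

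Setting $w_t:=\widetilde w\circ\Psi_t^{-1}$, one gets $w_t\in\mathrm{H}^1(B)$ with $\operatorname{supp}w_t=\Psi_t(\overline\Omega)$ a compact subset of $\Omega$, hence $w_t\in\m{H01Om}$, and $\nabla w_t=(D\Psi_t^{-1})^{\top}\bigl[(\nabla\widetilde w)\circ\Psi_t^{-1}\bigr]$. Next I would show $w_t\to w$ in $V$ as $t\to 0^+$, i.e. $\nabla w_t\to\nabla\widetilde w$ in $\mathrm{L}^2(B;\R^d)$ and in $\mathrm{E}_{\phi}(B;\R^d)$; since $D\Psi_t^{-1}\to I$ uniformly it suffices to prove $(\nabla\widetilde w)\circ\Psi_t^{-1}\to\nabla\widetilde w$ in these two spaces. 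The $\mathrm{L}^2$ statement is classical. For $\mathrm{E}_{\phi}$ the argument must be routed through the modular: by the change of variables formula $\rho_{\phi,B}(\lambda\,g\circ\Psi_t^{-1})=\int_B\phi(\lambda g)\,|\det D\Psi_t|\le 2\,\rho_{\phi,B}(\lambda g)$ for small $t$, whence the modular–norm inequalities and the convexity of $\phi$ give $\|g\circ\Psi_t^{-1}\|_{\phi,B}\le 2\|g\|_{\phi,B}$ uniformly in small $t$; on the other hand, if $g$ is continuous with support in a fixed compact subset of $B$, then $g\circ\Psi_t^{-1}\to g$ uniformly with supports in a fixed compact set, hence in $\mathrm{E}_{\phi}(B;\R^d)$ since $\mathrm{L}^{\infty}(B;\R^d)\hookrightarrow\mathrm{E}_{\phi}(B;\R^d)$. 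Combining these two facts with the density of $\mathscr{C}_c(B;\R^d)$ in $\mathrm{E}_{\phi}(B;\R^d)$ (the density of $\m{CcinftyOmRd}$, with $\Omega$ replaced by $B$) and the usual three-$\varepsilon$ argument yields $(\nabla\widetilde w)\circ\Psi_t^{-1}\to\nabla\widetilde w$ in $\mathrm{E}_{\phi}(B;\R^d)$, and then, using that $D\Psi_t^{-1}-I\to 0$ in $\mathrm{L}^{\infty}$ while $\|(\nabla\widetilde w)\circ\Psi_t^{-1}\|_{\phi,B}$ stays bounded, that $\nabla w_t\to\nabla\widetilde w$ there. Finally, given $\varepsilon>0$ I would pick $t$ with $\|w_t-w\|_V<\varepsilon/2$ and then mollify: for $\delta$ small enough, $w_t*\rho_\delta\in\m{CcinftyOm}$ and $\nabla(w_t*\rho_\delta)=(\nabla w_t)*\rho_\delta\to\nabla w_t$ in $\mathrm{L}^2$ and in $\mathrm{E}_{\phi}$ — Jensen's inequality shows that convolution does not increase the modular, hence is a contraction on $\mathrm{L}_{\phi}$, and together with uniform convergence on compacts and the density of smooth functions in $\mathrm{E}_{\phi}$ this gives $\mathrm{L}_{\phi}$-convergence — so choosing $\delta$ with $\|w_t*\rho_\delta-w_t\|_V<\varepsilon/2$ completes the argument.

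The main obstacle is precisely the continuity of composition, and of mollification, near the identity on $\mathrm{E}_{\phi}$: because $\phi$ need not satisfy the $\Delta_2$-condition, $\mathrm{E}_{\phi}$ is in general a proper subspace of $\mathrm{L}_{\phi}$, smooth functions are not dense in all of $\mathrm{L}_{\phi}$, and the only density available is in $\mathrm{E}_{\phi}$, so a naive density argument fails and one has to pass through the modular estimates above. The geometric construction of the inward vector field $b$, although entirely standard for Lipschitz domains, is the other point requiring care, in particular the uniformity ensuring $\operatorname{dist}(\Psi_t(\overline\Omega),\partial\Omega)>0$.
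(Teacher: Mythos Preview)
Your overall strategy---push the support inward by a near-identity diffeomorphism, then mollify---is sound in the \emph{isotropic} case, but there is a genuine gap in the anisotropic setting that the paper explicitly covers. The problematic step is the last one in your convergence argument for $\nabla w_t$: you write that $(D\Psi_t^{-1})^{\top}-I\to 0$ in $\mathrm{L}^{\infty}$ together with the boundedness of $\norm{potOm}{(\nabla\widetilde w)\circ\Psi_t^{-1}}$ gives $\nabla w_t\to\nabla\widetilde w$ in $\m{orEpotOmRd}$. This tacitly uses that pointwise multiplication by a bounded matrix-valued function is a bounded operator on $\mathrm{L}_{\pot}$, so that $\norm{potOm}{(M_t-I)h_t}\lesssim\|M_t-I\|_{\infty}\norm{potOm}{h_t}$. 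For an anisotropic \nfun\ this fails: take for instance $\pot(\xi_1,\xi_2)=\xi_1^2+e^{|\xi_2|}-|\xi_2|-1$ and $g=(g_1,0)$ with $g_1\in\m{L2}$ but $e^{|g_1|}\notin\m{L1}$. Then $g\in\mathrm{E}_{\pot}$, yet for the $90^\circ$ rotation $A$ one has $Ag\notin\mathrm{L}_{\pot}$, and hence $(I+\eps A)g\notin\mathrm{L}_{\pot}$ for every $\eps\neq 0$. Thus even for matrices arbitrarily close to the identity the product may leave $\mathrm{L}_{\pot}$ altogether, and your three-$\eps$ argument cannot close on the remainder term $(D\Psi_t^{-1})^{\top}\bigl[(\nabla\widetilde w-g_k)\circ\Psi_t^{-1}\bigr]$.

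The paper's proof is engineered precisely to avoid any linear transformation of the gradient. It first truncates $w$ to $T_n(w)$; this makes the partition-of-unity term $T_n(w)\nabla\chi_j$ \emph{bounded}, hence trivially in $\m{orEpotOmRd}$, so multiplying by the cut-offs $\chi_j$ causes no difficulty---the very issue you went out of your way to circumvent by acting with the partition of unity on the vector field instead. The boundary pieces are then pushed inward by a pure \emph{translation} in the local graph coordinates, for which $\nabla(\widetilde w_j)=(\nabla\overline w_j)(\cdot+\delta_j e_d)$ without any Jacobian factor, and finally mollified. Both translation and mollification act on $\m{orEpotOmRd}$ without introducing a matrix, and their continuity is available from \cite[Proposition~1.2]{donaldson1974inhomogeneous}. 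Your global-diffeomorphism approach is cleaner geometrically and would work verbatim if $\pot$ were radial (then $\norm{potOm}{Mg}\le\|M\|\,\norm{potOm}{g}$), but for the anisotropic spaces treated here you need either to revert to chartwise translations or to insert an initial truncation and then argue as the paper does.
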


The proof of this result follows \cite[Lemma 2.3]{emmrich2013convergence} very closely.
	\begin{proof}
		Let $w\in V$ and $\eps>0$. For $n\in\N$ we define
	\begin{equation*}
		[T_n(w)](x) = \begin{cases}
			w(x)&\text{if }|w(x)|\leq n\\
			n&\text{if }w(x)>n\\
			-n &\text{if }w(x)<-n
		\end{cases}
	\end{equation*}
	and $\Omega_n=\{x\in\Omega|\,|w(x)|>n\}$. Since $w$ is measurable so is $\Omega_n$ and using Chebyshev's and  Poincar\'e-Friedrichs' inequality we have $|\Omega_n|\leq \frac{C}{n^2}\norm*{\m{H01Om}}{w}^2$. An application of Lebesgue's theorem on dominated convergence then shows
	\begin{equation*}
		\norm{2Om}{\nabla T_n(w) -\nabla w}^2 = \int_{\Omega_n} |\nabla w|^2 \dx\to 0\text{ as }n\to\infty.
	\end{equation*}
	Because $\nabla w\cdot\eta\in\m{L1Om}$ for any $\eta\in\m{orspapot*OmRd}$, the absolute continuity of the integral together with the argumentation above also shows that
	\begin{equation*}
		\int_{\Omega} (\nabla T_n(w) -\nabla w) \cdot \eta \dx \to 0\text{ as }n\to\infty.
	\end{equation*}
	Thus for $n$ sufficiently large
	\begin{align*}
		\norm{2Om}{\nabla T_n(w) -\nabla w}&<\frac{\eps}{4} & &\text{and} & \left|\int_{\Omega}(\nabla T_n(w)-\nabla w)\cdot\eta\dx\right| <\frac{\eps}{4}
	\end{align*}
	for all $\eta\in\m{orspapot*OmRd}$.

	Since $\Omega$ is a Lipschitz domain $\partial \Omega$ is compact and thus there is a finite number of points $x^{(j)}\in\partial\Omega$, radii $r_j>0$ and Lipschitz continuous functions $\lambda_j:\R^{d-1}\to\R$, $j=1,\ldots, J$ such that -- up to a rigid motion if necessary --
	\begin{equation*}
	 	\Omega\cap B(x^{(j)},r_j) = \{ y=[y_1,\ldots, y_{d-1},y_d]\in B(x^{(j)},r_j)| y_d < \lambda_j(y_1,\ldots, y_{d-1}) \},
	\end{equation*}
	where $B(x^{(j)},r_j)$ denotes the open ball of radius $r_j$ with origin $x^{(j)}$.
	For sufficiently small $\delta_0>0$ we set  $\Omega_0:=\{x\in\Omega|\operatorname{dist}(x,\partial\Omega)>\delta_0\}$ and $\Omega_j:=B(x^{(j)},r_j)$ f\"ur $j=1,\ldots,J$ so that $(\Omega_j)_{j=0}^J$ is an open cover of $\overline{\Omega}$. Let $(\chi_j)_{j=1}^J$ be a smooth partition of unity for $\overline{\Omega}$ subordinate to this open cover.

	We now set $w_j:=\chi_j T_n(w)$, $j=0,\ldots,J$ then $T_n(w)=\sum_{j=0}^J w_j$, where we extend $T_n(w)$ with zero outside of $\Omega$. Note that each $w_j$ has compact support in $\Omega_j$, $j=1,\ldots,J$, respectively, since $w\in V\subset\m{H01Om}$.

	Let
	\begin{equation*}
		J_0(x):= \begin{cases}
		c_0 \exp\left( - \frac{1}{1-|x|^2}\right) &\text{if }|x|^2<1,\\
		0&\text{otherwise,}
		\end{cases}
	\end{equation*}
	where $c_0>0$ is such that $\int_{\R^d}J_0\dx=1$, and set for sufficiently small $\delta>0$ 
	\begin{equation}
		 J_{\delta}(x) = \delta^{-d} J_0(\delta^{-1}x),\quad x\in\R^d.\label{molification}
	\end{equation}
	For any locally integrable function $u$, the mollification $J_{\delta}* u$ is then a smooth function with compact support in $B(0,\delta)$. 

	\begin{enumerate}[label=(\arabic*)]
		 \item First we consider $j=0$. We observe that $w_0\in\m{H01Om}$ with $\nabla w_0 = (\nabla \chi_0)T_n(w)+\chi_0\nabla T_n(w)\in\m{orEpotOmRd}$.
		 The mollification is continuous in $\m{H01Om}$ with respect to the strong convergence and in $\m{orEpotOmRd}$ with respect to the weak convergence (see \cite[Proposition 1.2 (3)]{donaldson1974inhomogeneous}). There exists, therefore $\delta_{00}>0$ sufficiently small such that
		 \begin{equation*}
		 	\norm{2Om}{\nabla (J_{\delta_{00}}*w_0) -\nabla w_0} < \frac{\eps}{8}
		 \end{equation*}
		 and for all $\eta\in\m{orspapot*OmRd}$
		 \begin{equation*}
		 	\left| \int_{\Omega} (\nabla (J_{\delta_{00}}*w_0) - \nabla w_0)\cdot \eta \dx\right| < \frac{\eps}{8}
		 \end{equation*}
		 gilt. Here we also used that $\nabla (J_{\delta_0}*T_n(w)) = (\nabla J_{\delta_0})* T_n(w)\in\m{orEpotOmRd}$.

		\item Next we consider $j\neq 0$. Since translation is continuous in $\m{H01Om}$ with respect to the strong convergence and continuous in $\m{orEpotOmRd}$ with respect to the weak convergence (see \cite[Proposition 1.2 (2)]{donaldson1974inhomogeneous}) and since translation and derivative commute, there exists sufficiently small numbers $\delta_j>0$ such that for the translation
		
		\begin{equation*}
		  \widetilde{w}_j (x_1,\ldots,x_{d-1},x_d) := \overline{w_j} (x_1,\ldots,x_{d-1},x_d+\delta_j)
		\end{equation*}
		(where $\overline{w}_j$ is the extension of $w_j$ with zero outside of $\overline{\Omega}$)
		we have
		\begin{equation*}
		 	\norm{2Om}{\nabla \widetilde{w}_j -\nabla w_j} < \frac{\eps}{16(J+1)}
		 \end{equation*}
		 and for all $\eta\in\m{orspapot*OmRd}$
		 \begin{equation*}
		 	\left| \int_{\Omega} (\nabla \widetilde{w}_j - \nabla w_j)\cdot \eta \dx\right| < \frac{\eps}{16(J+1)}.
		 \end{equation*}
		 As before, there exists sufficiently small numbers $\widetilde{\delta}_j>0$ such that
		 \begin{equation*}
		 	\norm{2Om}{\nabla J_{\widetilde{\delta}_j}*\widetilde{w}_j -\nabla \widetilde{w}_j} < \frac{\eps}{16(J+1)}
		 \end{equation*}
		 and for all $\eta\in\m{orclapot*OmRd}$
		 \begin{equation*}
		 	\left| \int_{\Omega} (\nabla J_{\widetilde{\delta}_j}*\widetilde{w}_j - \nabla \widetilde{w}_j)\cdot \eta \dx\right| < \frac{\eps}{16(J+1)}.
		 \end{equation*}
	\end{enumerate}
	Setting $\widetilde{w}_0:=w_0$, $\widetilde{\delta}_0:=\delta_{00}$ and $w_{\eps}:= \sum_{j=0}^J J_{\widetilde{\delta}_j}*\widetilde{w}_j$ we finally find
		\begin{equation*}
		 	\norm{2Om}{\nabla w_{\eps}-\nabla w} < \frac{\eps}{2}
		 \end{equation*}
		 and for all $\eta\in\m{orspapot*OmRd}$
		 \begin{equation*}
		 	\left| \int_{\Omega} (\nabla w_{\eps}-\nabla w)\cdot \eta \dx\right| < \frac{\eps}{2}
		 \end{equation*}
	and thus
	\begin{equation*}
		\norm*{V}{w_{\eps}-w}<\eps.
	\end{equation*}
	By construction also $w_{\eps}\in\m{CcinftyOm}$.
	\end{proof}



\bibliographystyle{siam}
\bibliography{thesis.bib}

\end{document}